\newtheorem{theorem}{Theorem}
\newtheorem{lemma}[theorem]{Lemma}
\title{Improved bounds for colouring circle graphs}
\author{James Davies\thanks{Department of Combinatorics and Optimization, University of Waterloo, Waterloo, Canada. E-mail: \texttt{jgdavies@uwaterloo.ca}.}}
\date{}
\begin{document}

\maketitle

\begin{abstract}
	We prove the first $\chi$-bounding function for circle graphs that is optimal up to a constant factor. To be more precise, we prove that every circle graph with clique number at most $\omega$ has chromatic number at most $2\omega \log_2 (\omega) +2\omega \log_2(\log_2 (\omega)) + 10\omega$.
\end{abstract}

\section{Introduction}

A \emph{circle graph} is an intersection graph of chords on a circle; each vertex corresponds to a chord and two vertices are adjacent whenever their corresponding chords intersect. A class of graphs is \emph{$\chi$-bounded} if there exists some function which bounds the maximum chromatic number of graphs in the class in terms of their clique number $\omega$.
We call such a function a \emph{$\chi$-bounding function}.

Gy{\'a}rf{\'a}s~\cite{gyarfas1985chromatic} proved that circle graphs are $\chi$-bounded and asked~\cite{gyarfas1987problems,gyarfas1985covering} for improved $\chi$-bounding functions. In particular Gy{\'a}rf{\'a}s~\cite{gyarfas1987problems} originally asked if a linear $\chi$-bounding function was possible. This was answered in the negative by Kostochka~\cite{kostochka1988upper} who gave the superlinear lower bound of $\frac{1}{2}\omega (\ln \omega - 2)$. Matching this up to a constant factor, we prove the first $O(\omega \log \omega)$ upper bound.

\begin{theorem}\label{main}
	Every circle graph with clique number at most $\omega$ has chromatic number at most $2\omega \log_2 (\omega) +2\omega \log_2(\log_2 (\omega)) + 10\omega$.
\end{theorem}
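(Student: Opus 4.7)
My plan is a layered colouring strategy. I will partition the chords into $k$ classes $V_1,\dots,V_k$ with $k \le \log_2 \omega + O(\log_2 \log_2 \omega)$, such that each induced subgraph $G[V_i]$ has chromatic number at most $2\omega$. Colouring each class with a fresh palette of $2\omega$ colours then gives the main terms $2\omega \log_2\omega + 2\omega \log_2\log_2\omega$ in the bound, with the additive $10\omega$ absorbing the base case and rounding losses.

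The structural starting point is the classical observation that the chords crossing any fixed chord form a permutation graph and therefore admit a proper colouring with at most $\omega$ colours. The plan is to lift this from single chords to carefully chosen families: for a suitable family $\mathcal{F}_i$ of reference chords, the set of chords crossing at least one member of $\mathcal{F}_i$ should induce a subgraph of chromatic number at most $2\omega$. The classes $V_i$ are then built iteratively: at stage $i$, pick $\mathcal{F}_i$ from the currently remaining chords, let $V_i$ be the chords crossing $\mathcal{F}_i$, and continue with the residual chord set.

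To keep the number of stages at $\log_2 \omega + O(\log_2 \log_2 \omega)$, I need a halving argument for some parameter of the residual graph that is initially at most $\omega$. The obvious candidate, the clique number, is \emph{not} the right one: a halving recursion $f(\omega) \le 2\omega + f(\omega/2)$ gives only $O(\omega)$ by geometric summation, so the parameter that actually halves must be more subtle---perhaps the maximum length of a nested chain of ``obstructions'', or the depth of an elimination scheme associated with the circle representation---while the clique number stays close to $\omega$ throughout.

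The main obstacle is simultaneously proving (a) that each $V_i$ admits a $2\omega$-colouring and (b) that the chosen residual parameter halves at each stage. Part (a) requires showing that the union of permutation-graph layers arising from $\mathcal{F}_i$ can be coloured with only a factor-of-two overhead over the single-chord case, which is plausible if $\mathcal{F}_i$ has a tree-like or interval-like internal structure but far from automatic in general. Part (b) requires both a structural analysis of circle graphs---identifying the right notion of ``complexity'' to track---and a careful construction of $\mathcal{F}_i$ that forces this complexity to drop by a constant factor with the required colour budget. The $\log_2 \log_2 \omega$ correction almost certainly arises from a slight slack in the halving step, accumulating to an additive $\log_2 \log_2 \omega$ overshoot in the number of stages.
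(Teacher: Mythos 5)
There is a genuine gap here, and it is exactly where you flag it yourself: you have no candidate for the residual parameter that is supposed to halve, and no argument that each class $V_i$ is $2\omega$-colourable. As stated, the proposal is a shape for a proof rather than a proof, and the two missing ingredients are the hard content of the theorem. It is also worth saying that the ``halving of some hidden complexity parameter'' picture does not match how the $\log\omega$ factor actually arises in the paper, so the intuition guiding the search for that parameter is pointing in the wrong direction.

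The paper does not partition the vertex set into $O(\log\omega)$ classes and colour each one separately. Instead it works with a notion of \emph{pillar assignment}: a totally ordered set of points in $(0,1)$, together with a carefully structured proper partial colouring (built via Lemma~\ref{permutation colouring}) of all intervals containing at least one pillar. The quantity that is controlled is not the clique number of a residual graph but the \emph{degree} of an arch, i.e.\ the number of colours that intervals with an endpoint in a given pillar-free subinterval receive. The logarithm enters through the divide-and-conquer extension step (Lemma~\ref{devide and conquer}): inserting a set $Q$ of new pillars inside an arch by repeated bisection increases both the arch degree and the total palette size by at most $\omega\lceil\log_2(|Q|+1)\rceil$. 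The companion Tur\'an-type lemma (Lemma~\ref{extremal}, built on Theorem~\ref{estype}) is then used to show that one never needs to insert more than roughly $\omega\log_2\omega$ pillars at once, so $\log_2|Q|\approx\log_2\omega+\log_2\log_2\omega+O(1)$, and the degree bound $\omega\log_2\omega+\omega\log_2\log_2\omega+6\omega$ is self-reproducing under the extension. That bootstrap --- a bound on the number of pillars, feeding $\log$ of that count back into the colour budget --- is the actual source of both the $\log_2\omega$ and the $\log_2\log_2\omega$ terms. It is not a halving recursion on a parameter initialised at $\omega$, and there is no family $\mathcal{F}_i$ of reference chords producing $2\omega$-chromatic layers. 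To turn your sketch into a proof you would have to invent both the halving parameter and the $2\omega$-colourability argument from scratch; neither is supplied by known structure theory for circle graphs, and I do not see how to fill either gap along the lines you describe.
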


Theorem~\ref{main} follows a number of improvements to the $\chi$-bounding function over the last 35 years~\cite{gyarfas1985chromatic,kostochka1988upper,kostochka1997covering,daviescircle}. In particular we extend and refine the techniques that were recently introduced by the author and McCarty~\cite{daviescircle} to prove the first polynomial $\chi$-bounding function of $O(\omega^2)$ for circle graphs.
Two other classes generalising circle graphs that are now known to have polynomially $\chi$-bounding functions are interval filament graphs~\cite{daviescircle,krawczyk2017line} and grounded $L$-graphs~\cite{davies2021colouring}. Combining Theorem~\ref{main} with a result in~\cite{krawczyk2017line} improves the best known $\chi$-bounding function for interval filaments graphs from $O(\omega^4)$ to $O(\omega^3\log \omega)$.
There are also many classes that contain circle graphs and are known to be $\chi$-bounded (although they usually have extremely large $\chi$-bounding functions), for the most general examples see~\cite{davies2020vertex,scott2020induced,rok2019coloring,chudnovsky2016induced}, and for more on $\chi$-boundedness see the recent survey by Scott and Seymour~\cite{scott2020survey}.

Circle graphs and their representations are fundamental objects that appear in a diverse range of study. Some examples include knot theory~\cite{bar1995vassiliev,birman1993knot},  bioinformatics~\cite{hofacker1998combinatorics}, quantum field theory~\cite{marie2013chord}, quantum computing~\cite{bravyi2007measurement,van2004graphical}, and data structures~\cite{flajolet1980sequence}. On the more combinatorial side, in addition to discrete and computational geometry, circle graphs and their representations also appear in the study of continued fractions~\cite{touchard1952probleme}, vertex-minors~\cite{geelen2020grid}, matroid representation~\cite{bouchet1987unimodularity}, and in various sorting problems~\cite{golumbic2004algorithmic}. Circle graphs are also deeply related to planar graphs; the fundamental graphs of planar graphs are exactly the class of bipartite circle graphs~\cite{de1984characterization}. Direct applications of colouring circle graphs include finding the minimum number of stacks needed to obtain a given permutation~\cite{even1971queues}, solving routing problems such as in VLSI physical design~\cite{sherwani2012algorithms}, and finding stack layouts of graphs, which also has a number of additional applications of its own (see~\cite{dujmovic2004linear}).

With these applications in mind, it is desirable to have an efficient algorithm for colouring circle graphs. While their clique numbers can be found in polynomial time~\cite{gavril1973}, unfortunately the problem of determining their chromatic number is NP-complete~\cite{garey1980complexity}. So the best that can be hoped for is an efficient approximation algorithm. The proof of Theorem~\ref{main} is constructive and yields a practical polynomial time algorithm for colouring circle graphs with a colouring that is optimal up to at most a logarithmic factor of the chromatic number.

For completeness, we also provide a new simple lower bound construction for the {$\chi$-bounding} function of circle graphs. As a bonus it improves Kostochka's~\cite{kostochka1988upper} lower bound by a factor of 2.

\begin{theorem}\label{lower}
	For every positive integer $\omega$ there is a circle graph with clique number at most $\omega$ and chromatic number at least $\omega(\ln \omega - 2)$.
\end{theorem}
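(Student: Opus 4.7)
The plan is a probabilistic construction via random chords. I would sample a random circle graph $G_N$ on $N$ vertices by placing $2N$ points on a circle in convex position and taking a uniformly random perfect matching (or, essentially equivalently, $N$ independent uniform chords). The analysis rests on two elementary counting facts: any specified $k$-subset of chords is pairwise crossing with probability $1/(2k-1)!!$, since among the $(2k-1)!!$ perfect matchings on $2k$ convex-position points exactly one pairs up every interleaving pair; and any specified $s$-subset is pairwise non-crossing with probability $C_s/(2s-1)!! = 2^s/(s+1)!$, where $C_s$ is the $s$-th Catalan number.

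By linearity of expectation the expected number of $(\omega{+}1)$-cliques is $\binom{N}{\omega+1}/(2\omega+1)!!$ and the expected number of $s$-element independent sets is $\binom{N}{s}\cdot 2^s/(s+1)!$. A standard deletion argument then proceeds as follows: choose $N$ so that the first expectation is at most $N/2$, choose $s$ so that the second expectation is at most $1/2$, and with positive probability delete one vertex from each oversized clique to obtain a circle graph with clique number at most $\omega$, no $s$-element independent set, and at least $N/2$ vertices; hence $\chi\geq N/(2(s-1))$. Using Stirling in the form $(2\omega+1)!!\sim\sqrt{2}\,(2(\omega+1)/e)^{\omega+1}$ and $(s+1)!\sim\sqrt{2\pi s}\,(s/e)^{s+1}$, the clique constraint pushes $N$ up to roughly $2\omega^2/e^2$; the factor $e^2$ that emerges naturally from the Stirling ratios is exactly what should produce the additive $-2$ in the target expression $\omega(\ln\omega-2)=\omega\ln(\omega/e^2)$.

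I expect the hardest step to be extracting the logarithmic factor cleanly. A naive application of the deletion argument with $N\approx 2\omega^2/e^2$ forces $s\approx 2\omega$, yielding only $\chi\geq \omega/e^2$, which is merely linear in $\omega$. To recover the $\ln\omega$ improvement one should bound the expected number of proper $c$-colourings of $G_N$ directly: a short calculation with non-crossing matchings shows that the probability a colouring with class sizes $n_1,\dots,n_c$ is proper equals $2^N/\prod_i(n_i+1)!$, and summing this over all colourings with $c$ slightly below $\omega\ln\omega$ and applying Stirling (the contribution concentrates on roughly balanced partitions) is what should force the expected number of proper $c$-colourings below one. Combining this sharper chromatic threshold with the clique-deletion step is, I expect, what produces the full factor-of-two improvement over Kostochka's bound.
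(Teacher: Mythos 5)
Your approach is genuinely different from the paper's, which uses an explicit deterministic chord diagram $\mathcal D_{n,\omega}$ rather than a random one: for each cyclic distance $j\in[\omega-1]$ it places $\lfloor\omega/(j+1)\rfloor$ coinciding chords of span $j$ at each of $n$ starting positions, then verifies directly that the clique number is at most $\omega$, the independence number is less than $n$, and the number of chords exceeds $n\omega(\ln\omega-2)$ via the harmonic sum $\sum_{j\le\omega}1/j$. The bound $\chi\ge|V|/\alpha$ then finishes. The $\ln\omega$ factor comes entirely from the multiplicities, not from any probabilistic amplification.

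Unfortunately your random construction has a gap that I believe is fatal, not merely technical. You correctly observe that the naive independence-number deletion argument gives only a linear bound $\chi\gtrsim\omega/e^2$, and you propose to beat it by a first-moment bound on the number $X_c$ of proper $c$-colourings. But carrying out the computation you sketch gives
\[
\mathbb E[X_c]\;=\;2^N N!\,[x^N]\Bigl(\sum_{n\ge0}\tfrac{x^n}{n!\,(n+1)!}\Bigr)^{\!c}
\;\approx\;\Bigl(\tfrac{2e\,c^2}{N}\Bigr)^{\!N},
\]
where the saddle point sits at the balanced partition $n_i\approx N/c$, exactly as you predicted. This is below $1$ only when $c\lesssim\sqrt{N/(2e)}$. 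Since controlling $(\omega+1)$-cliques forces $N\approx 2\omega^2/e^2$, the first-moment method certifies non-$c$-colourability only up to $c\approx\omega/e^{3/2}$ — again linear in $\omega$, and in fact \emph{worse} than the simple $\chi\ge N/\alpha$ bound by a constant. For $c$ anywhere near $\omega\ln\omega$ you get $2ec^2/N\approx e^3(\ln\omega)^2\gg1$, so $\mathbb E[X_c]$ is astronomically large, the opposite of what you need. The underlying reason is that in the uniform random chord model both $\omega$ and $\alpha$ scale as $\Theta(\sqrt N)$, so $\chi_f=N/\alpha=\Theta(\omega)$, and a first-moment argument cannot separate $\chi$ from $\chi_f$ by a $\log$ factor here. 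There is also a secondary issue you did not address: after deleting vertices to destroy oversized cliques, the remaining graph is a subgraph, so it has \emph{more} proper $c$-colourings, not fewer, and $\mathbb E[X_c]<1$ on $G_N$ would need to be strengthened considerably before it said anything about the deleted graph. The explicit multiplicity construction in the paper sidesteps all of this; I would suggest looking for a deterministic packing argument rather than trying to rescue the random model.
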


For large clique number this leaves a constant factor of about $2\log_2(e) \approx 2.8854$ between the upper and lower bounds. These new upper and lower bounds are remarkably tight, but the difference between the logarithmic bases used in the upper and lower bounds is certainly curious.

For small clique number just one non-trivial tight bound is known; the maximum chromatic number of a triangle-free circle graph is equal to 5~\cite{kostochka1988upper,ageev1996triangle}.
In the next case, the best known upper bound is due to  Nenashev~\cite{nenashev2012upper} who proved that $K_4$-free circle graphs have chromatic number at most 30. 
By optimizing the proof of Theorem~\ref{main} to the $\omega=3$ case, it is possible to improve this upper bound to 19. We sketch the required modifications after the proof of Theorem~\ref{main}.

The proof of Theorem~\ref{main} is entirely self-contained and covered in the next few sections. In the last section we prove Theorem~\ref{lower}.

\section{Preliminaries}

The proof of Theorem~\ref{main} is essentially by proving a stronger statement on being able to extend certain well-structured partial pre-colourings. This better facilitates an inductive argument and is an idea most famously used in Thomassen's~\cite{thomassen1994every} proof that planar graphs are 5-chooseable. As in~\cite{daviescircle}, we use what we call a pillar assignment to colour our circle graphs. The reason for this is two-fold: pillar assignments provide a convenient way to describe the possible pre-colourings, and they also act as a useful tool for extending the pre-colourings.
However we require a definition of pillar assignments that is different to that of~\cite{daviescircle}.

In~\cite{daviescircle} we used pillar assignments to obtain an improper colouring such that every monochromatic component was a permutation graph.
By exploiting the structure of our improper colouring and using a natural Tur{\'a}n-type lemma on permutation graphs, we were able to bound the number of colours needed in this improper colouring.
Then finally a proper colouring was obtained by refining the improper colouring.

Although significantly easier to do so, obtaining a proper colouring by first going via this improper colouring appears to present a degree of inefficiency in minimising the number of colours used.
So the most significant difference with the notion of pillar assignment that we use is that it provides a proper colouring of the circle graph directly.
This involves colouring certain induced permutation subgraphs in a particular well-structured way.
The purpose of this additional structure in the colouring is to allow for a new Tur{\'a}n-type lemma. Although this lemma is less natural, it is much more specialised to our notion of pillar assignments.
With this new notion of pillar assignment and its tailor-made Tur{\'a}n-type lemma, we are then able to obtain the improved bounds with an inductive argument on extending pillar assignments in a similar way to in~\cite{daviescircle}.

As a step towards proving our required tailor-made Tur{\'a}n-type lemma, we actually prove a tight (and somewhat more abstract) version of the Tur{\'a}n-type lemma on permutation graphs used in~\cite{daviescircle} (see Theorem~\ref{estype}).

For convenience of proving Theorem~\ref{main} we use an interval overlap representation of our circle graphs rather than a chord diagram representation. An \emph{interval system} is a collection of open intervals in $(0,1)$ such that no two share an endpoint. Two distinct intervals $I_1,I_2$ \emph{overlap} if they have non-empty intersection, and neither is contained in the other. The \emph{overlap graph} of an interval system $\mathcal{I}$ is the graph with vertex set $\mathcal{I}$ where two vertices are adjacent whenever their corresponding intervals overlap. It can easily be checked that circle graphs are exactly overlap graphs of interval systems.
Similarly, permutation graphs are exactly the overlap graphs of interval systems $\mathcal{I}$ such that there exists a $p\in (0,1)$ with $p\in I$ for all intervals $I\in \mathcal{I}$.

It is often more convenient to examine properties of a circle graph as equivalent properties of their interval systems. Note that sets of pairwise non-overlapping intervals in an interval system correspond to stable sets in the overlap graph, and sets of pairwise overlapping intervals correspond to cliques.
Given an interval system $\mathcal{I}$, we let $\omega(\mathcal{I})$ be equal to the size of the largest set of pairwise overlapping intervals contained in $\mathcal{I}$.
Equivalently, $\omega(\mathcal{I})$ is equal to the clique number of the overlap graph of $\mathcal{I}$.
Similarly we consider colourings of an interval system with a notion equivalent to that of colourings of their overlap graphs. A proper partial colouring of an interval system $\mathcal{I}$ is an assignment of colours to a subset of the intervals of $\mathcal{I}$ so that no pair of overlapping intervals receive the same colour. We say that a proper colouring of $\mathcal{I}$ is \emph{complete} if every interval of $\mathcal{I}$ is assigned a colour.

For an interval $I\subseteq (0,1)$, let $\ell(I)$ be its leftmost endpoint and let $r(I)$ be its rightmost endpoint. For two intervals $I_1,I_2\subseteq (0,1)$, we use $I_1<I_2$ to denote that $r(I_1) < \ell(I_2)$, and similarly $I_1 > I_2$ to denote that $\ell(I_1) > r(I_2)$.
Given a finite partially ordered set $(X, \preceq)$, and some $x\in X$, the \emph{height} $h(x)$ of $x$ in the partial order is equal to the maximum length of a chain ending in $x$. For a positive integer $k$, we let $[k] = \{1,\dots , k\}$.

We finish this section with a lemma on colouring permutation graphs that is used in our definition of pillar assignments.
In addition to the bound on the number of colours required, we also make use of the described additional properties of this colouring.

\begin{figure}
	\centering
	\begin{tikzpicture}
	\def \w {6} \def \h {.45} \def \s {.5}
	\tikzstyle{end} = [draw,circle, inner sep=.05cm]
	\tikzstyle{T} = [thick, dashed]
	\tikzstyle{One} = [thick]
	\newcommand{\drawBase}{
		\node[] (A) at (-\w,0) {};
		\node[] (B) at (\w,0) {};
		\draw[thick] (A) -- (B);
	}
	\newcommand{\drawI}[4]{
		\draw[#4] (#1,0) -- ++(0,#3) -- ++({#2+(-1)*(#1)},0) -- ++(0,-#3);
	}
	\drawI{-5}{2}{5*\h}{One}
	\node[label=below:{1}] at (-5,0.1) {};
	\drawI{-3}{3}{3*\h}{One}
	\node[label=below:{2}] at (-4,0.1) {};
	\drawI{-4}{4}{4*\h}{One}
	\node[label=below:{2}] at (-3,0.1) {};
	\drawI{-2}{1}{2*\h}{One}
	\node[label=below:{1}] at (-2,0.1) {};
	\drawI{-1}{5}{\h}{One}
	\node[label=below:{3}] at (-1,0.1) {};
	\drawBase
	\node[end, fill=gray, label=below:{$p$}] at (0,0) {};
	\end{tikzpicture}
	\caption{The colouring $\phi_p$ of an interval system whose intervals all contain $p$ in the case that $C=\{1,2,3\}$. The colour that the intervals receive is the number appearing below their leftmost endpoint.}
	\label{fig:pillA}
\end{figure}

\begin{lemma}\label{permutation colouring}
	Let $\mathcal{I}$ be an interval system with $\omega(\mathcal{I})= \omega$ such that all intervals of $\mathcal{I}$ contain some given point $p\in \mathbb{R}$, and let $C\subset \mathbb{N}$ have $|C|= \omega$.
	Then there is a proper colouring $\phi_p :\mathcal{I} \to C$ such that if $I_1,\dots, I_k \in \mathcal{I}$ are intervals with $\phi_p (I_1) < \dots < \phi_p (I_k)$ and $\ell(I_1) < \dots < \ell(I_k)$
	(or $r(I_1) < \dots < r(I_k)$),
	then there exist $k$ pairwise overlapping intervals $I_1^*, \dots I_k^*\in \mathcal{I}$ with $\ell(I_1^*), \dots , \ell(I_k^*) \in [\ell(I_1), \ell(I_k)]$ (or $r(I_1^*), \dots , r(I_k^*) \in [r(I_1), r(I_k)]$ respectively).
\end{lemma}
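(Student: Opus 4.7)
The plan is to build $\phi_p$ by a greedy procedure and then to verify the required witness property by induction. Process the intervals of $\mathcal{I}$ in order of increasing left endpoint, and to each $I$ assign the smallest element of $C$ not already used by any previously-processed interval overlapping $I$. Because all intervals contain $p$, two intervals $I_1,I_2$ with $\ell(I_1)<\ell(I_2)$ overlap precisely when $r(I_1)<r(I_2)$. Writing $I\prec J$ when $\ell(I)<\ell(J)$ and $r(I)<r(J)$, this is a partial order whose chains are exactly the cliques of the overlap graph; in particular its longest chain has length $\omega$. A short induction along the greedy order then shows that $\phi_p(I)$ is the element of $C$ whose rank equals the height of $I$ in $\prec$, so $\phi_p$ is a proper colouring into $C$.

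For the left-endpoint half of the conclusion, suppose $I_1,\dots,I_k$ satisfy $\phi_p(I_1)<\dots<\phi_p(I_k)$ and $\ell(I_1)<\dots<\ell(I_k)$. For any interval $I$ and any colour $c$ appearing on a $\prec$-predecessor of $I$, let $W(I,c)$ denote the interval $J$ with $\phi_p(J)=c$ and $J\prec I$ that has the maximum left endpoint. Set $I_k^*=I_k$ and $I_j^*=W(I_{j+1}^*,\phi_p(I_j))$ for $j=k-1,\dots,1$. By construction $I_j^*\prec I_{j+1}^*$ for each $j$, so the $I_j^*$ form a $\prec$-chain and hence a clique. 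It remains to show $\ell(I_j^*)\ge\ell(I_j)$ for every $j$; together with the automatic $\ell(I_j^*)\le\ell(I_k^*)=\ell(I_k)$, this places all $\ell(I_j^*)$ inside $[\ell(I_1),\ell(I_k)]$.

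I would prove $\ell(I_j^*)\ge\ell(I_j)$ by downward induction on $j$, whose inductive step reduces to the following key claim: \emph{if $\phi_p(I')<\phi_p(I)$ and $\ell(I')<\ell(I)$, then $\ell(W(I,\phi_p(I')))\ge\ell(I')$}. To prove the claim, split on whether $I'$ overlaps $I$. If it does, then $I'\prec I$, so $I'$ is itself a candidate in the definition of $W(I,\phi_p(I'))$, and hence the maximiser has left endpoint at least $\ell(I')$. Otherwise, since $I'$ and $I$ share $p$ but do not overlap, one contains the other, and $\ell(I')<\ell(I)$ forces $I\subset I'$ and $r(I)<r(I')$. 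Suppose some candidate $J$ had $\ell(J)<\ell(I')$; then $\ell(J)<\ell(I')$ together with $r(J)<r(I)<r(I')$ gives $J\prec I'$, which places $J$ before $I'$ in the greedy order overlapping $I'$ and carrying the colour $\phi_p(I')$, contradicting the greedy rule applied to $I'$.

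The right-endpoint half follows by the symmetric argument. Define $W^r(I,c)$ as the $\prec$-predecessor of $I$ of colour $c$ with \emph{maximum right endpoint}, set $I_k^*=I_k$ and $I_j^*=W^r(I_{j+1}^*,\phi_p(I_j))$, and prove the analogue $r(W^r(I,\phi_p(I')))\ge r(I')$ by the same case analysis. In the non-overlap case one now has $I'\subset I$ with $\ell(I)<\ell(I')$, and any hypothetical candidate $J$ with $r(J)<r(I')$ satisfies $\ell(J)<\ell(I)<\ell(I')$ and $r(J)<r(I')$, again giving $J\prec I'$ and the same colour clash. The main obstacle in both halves is exactly this second case: when $I'$ contains $I$, $I'$ itself is no longer a candidate for $W$ or $W^r$, and one must appeal to the greedy rule applied at $I'$ to force the existence of a suitably placed witness.
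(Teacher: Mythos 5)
Your proof is correct and takes essentially the same approach as the paper's: the same height/first-fit colouring, the same greedy choice of witnesses $I_j^*$ as the $\prec$-predecessor of $I_{j+1}^*$ with the given colour and maximal left (resp.\ right) endpoint, and the same underlying argument for why the maximiser has endpoint at least $\ell(I_j)$ (resp.\ $r(I_j)$), which you state as a two-case claim and the paper states as a contradiction when the maximiser is too far left.
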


\begin{proof}
	Let $C=\{c_1, \dots , c_{\omega}\}$, where $c_1 < \dots < c_{\omega}$.
	Let $\preceq$ be the partial order of $\mathcal{I}$ such that $I \preceq I'$ whenever $\ell(I)< \ell(I')$ and $r(I) < r(I')$ (or $I=I'$). Notice that two intervals overlap exactly when they are comparable in the partially ordered set $(\mathcal{I}, \preceq)$.
	For each $I\in \mathcal{I}$, let $\phi_p (I)= c_{h(I)}$.
	The colouring $\phi_p$ is exactly the same as a first fit colouring of the intervals of $\mathcal{I}$ when they are ordered according to either their leftmost or rightmost endpoints.
	This provides a proper $C$-colouring as the size of the largest chain in $(\mathcal{I}, \preceq)$ is equal to $\omega$. For an example of such a colouring $\phi_p$, see Figure~\ref{fig:pillA}.
	It remains to show that this colouring satisfies the desired properties.
	
	Suppose that $I_1,\dots, I_k \in \mathcal{I}$ are such that $\phi_p(I_1) < \dots < \phi_p(I_k)$ and $\ell(I_1) < \dots < \ell(I_k)$.
	Let $I_k^*= I_k$ and  for each $j<k$ in decreasing order, let $I_j^*$ be the interval with $\ell(I_j)$ maximum, subject to $I_j^* \prec I_{j+1}^*$ and $\phi_p(I_j^*)=\phi_p(I_j)$. Such intervals $I_j^*$ must exist with $\ell(I_j) \le \ell(I_j^*) < \ell(I_{j+1}^*)$ by the choice of colouring $\phi_p$ as if $I'\in \mathcal{I}$ were an interval with $I' \prec I_{j+1}^*$, $\phi_p(I')=\phi_p(I_j)$, and $\ell(I')<  \ell(I_{j+1}^*)$, then $I'$ and $I_{j}$ would be non-overlapping, and so $I_{j}$ would overlap with $I_{j+1}^*$ and hence precede $I_{j+1}^*$ in the partial order. Then $I_1^* \prec \dots \prec I_k^*$, and so $I_1^*, \dots I_k^*$ are pairwise overlapping with $\ell(I_1^*), \dots , \ell(I_k^*) \in [\ell(I_1), \ell(I_k)]$ as required.
	
	The other case is very similar.
	Suppose that $I_1,\dots, I_k \in \mathcal{I}$ are such that $\phi_p(I_1) < \dots < \phi_p(I_k)$ and $r(I_1) < \dots < r(I_k)$.
	As before, let $I_k^*= I_k$ and for each $j<k$ in decreasing order, let $I_j^*$ be the interval with $r(I_j)$ maximum, subject to $I_j^* \prec I_{j+1}^*$ and $\phi_p(I_j^*)=\phi_p(I_j)$.
	As before, such intervals $I_j^*$ must exist with $r(I_j) \le r(I_j^*) < r(I_{j+1}^*)$ by the choice of colouring $\phi_p$. So $I_1^* \prec \dots \prec I_k^*$ are again pairwise overlapping with $r(I_1^*), \dots , r(I_k^*) \in [r(I_1), r(I_k)]$ as required.
\end{proof}

\section{Pillar assignments}

We start this section by defining our notion of pillar assignments, the tool we use to colour circle graphs. The colouring in Lemma~\ref{permutation colouring} is crucial to the definition of pillar assignments and thus crucial for colouring our circle graphs. Afterwards we examine some properties of pillar assignments.

A \emph{pillar} of an interval system $\mathcal{I}$ is a point within $(0,1)$ that is distinct from the endpoints of the intervals of $\mathcal{I}$.
For totally ordered pillars $(P,\preceq)$, we say that an interval $I\in \mathcal{I}$ is \emph{assigned} to a pillar $p\in P$ if $p\in I$ and there is no pillar $p'\in P$ such that $p'\in I$ and $p' \prec p$. So every interval is assigned to at most one pillar.
For each pillar $p\in P$, we let $\mathcal{I}_p$ be the intervals of $\mathcal{I}$ that are assigned to $p$.
The \emph{foundation} $F_p$ of a pillar $p\in P$ is the open interval containing $p$ that has its endpoints in $\{p'\in P : p' \prec p\}\cup \{0,1\}$ and contains no pillar $p'\in P$ with $p' \prec p$.

Next we show how to obtain a proper partial colouring of an interval system $\mathcal{I}$ from a collection of totally ordered pillars $(P,\preceq)$.
We refer the reader to Figure~\ref{fig:pillB} for an illustration of a pillar assignment and the colouring obtained from ordered pillars.
For each pillar $p \in P$ in order, we assign a set of colours $C_p\subset \mathbb{N}$ to $p$ and a $C_p$-colouring $\phi_p: \mathcal{I}_p \to C_p$ of the intervals assigned to $p$ as follows.

If $p$ is the first pillar in the total order $\preceq$, then let $C_p= \left\{1,\dots, \omega(\mathcal{I}_p) \right\}$, and let $\psi_p = \phi_p$ be a $C_p$-colouring of $\mathcal{I}_p$ as in Lemma~\ref{permutation colouring}.

Otherwise let $p^*$ be the pillar immediately preceding $p$ in the total order $\preceq$.
Then let $\mathcal{F}_p$ be the intervals of $\mathcal{I}$ that have exactly one endpoint in $F_p$. Let $C_p$ be the set of the smallest $\omega(\mathcal{I}_p)$ positive integers that are not contained in $\psi_{p^*}(\mathcal{F}_p)$.
Then let $\phi_p$ be a $C_p$-colouring of $\mathcal{I}_p$ as in Lemma~\ref{permutation colouring}. Let $\psi_p=\psi_{p^*}\cup \phi_p$. Note that $\psi_p$ remains a proper partial colouring of $\mathcal{I}$ as the intervals of $\mathcal{I}_p$ are all contained in the foundation $F_p$, and so do not overlap with any of the intervals $\psi_{p^*}^{-1}(C_p)$ by the choice of $C_p$.

Then for the last pillar $q$ in the total order $\preceq$, we let $\psi_{(P,\preceq)}=\psi_q$. Another convenient equivalent definition which we often use is $\psi_{(P,\preceq)} = \bigcup_{p\in P} \phi_p$.

\begin{figure}
	\centering
	\begin{tikzpicture}
	\def \w {6} \def \h {.45} \def \s {.5}
	\tikzstyle{end} = [draw,circle, inner sep=.05cm]
	\tikzstyle{T} = [thick, dashed]
	\tikzstyle{One} = [thick]
	\newcommand{\drawBase}{
		\node[] (A) at (-7.5,0) {};
		\node[] (B) at (7.5,0) {};
		\draw[thick] (A) -- (B);
	}
	\newcommand{\drawI}[4]{
		\draw[#4] (#1,0) -- ++(0,#3) -- ++({#2+(-1)*(#1)},0) -- ++(0,-#3);
	}
	\drawBase
	\node[end, fill=gray, label=below:{$p_1$}] at (-4,0) {};
	\drawI{-7}{-1}{5*\h}{One}
	\node[label=below:{1}] at (-7,0.1) {};
	\drawI{-6.25}{-3.5}{\h}{One}
	\node[label=below:{1}] at (-6.25,0.1) {};
	\drawI{-5.5}{-2.5}{2*\h}{One}
	\node[label=below:{2}] at (-5.5,0.1) {};
	\drawI{-4.75}{-1.5}{4*\h}{One}
	\node[label=below:{3}] at (-4.75,0.1) {};
	\node[end, fill=gray, label=below:{$p_3$}] at (-2,0) {};
	\drawI{-3}{0}{3*\h}{One}
	\node[label=below:{5}] at (-3,0.1) {};
	\node[end, fill=gray, label=below:{$p_4$}] at (2,0) {};
	\drawI{0.5}{2.25}{\h}{One}
	\node[label=below:{2}] at (0.5,0.1) {};
	\drawI{-0.5}{6.25}{4*\h}{One}
	\node[label=below:{2}] at (-0.5,0.1) {};
	\drawI{1.25}{4}{3*\h}{One}
	\node[label=below:{6}] at (1.25,0.1) {};
	\node[end, fill=gray, label=below:{$p_2$}] at (6.75,0) {};
	\drawI{5.75}{7}{\h}{One}
	\node[label=below:{4}] at (5.75,0.1) {};
	\node[end, fill=gray, label=below:{$p_5$}] at (4.25,0) {};
	\drawI{2.75}{4.75}{\h}{One}
	\node[label=below:{1}] at (2.75,0.1) {};
	\drawI{3.5}{5.25}{2*\h}{One}
	\node[label=below:{3}] at (3.5,0.1) {};
	\end{tikzpicture}
	\caption{The colouring $\psi_{(P,\preceq )}$ of an interval system $\mathcal{I}$ for a collection of totally ordered pillars $(P,\preceq )$ with $P=\{p_1,p_2,p_3,p_4,p_5\}$ and $p_1 \prec p_2 \prec p_3 \prec p_4 \prec p_5$. The colour that the intervals receive is the number appearing below their leftmost endpoint. In this example we get that $C_{p_1}=\{1,2,3\}$, $C_{p_2}=\{4\}$, $C_{p_3}=\{5\}$, $C_{p_4}=\{2,6\}$, and $C_{p_5}=\{1,3\}$.}
	\label{fig:pillB}
\end{figure}

A \emph{pillar assignment} of an interval system $\mathcal{I}$ is a triple $(P,\preceq, \psi)$ such that $P$ is a set of pillars, $\preceq$ is a total ordering of $P$, and $\psi$ is the proper partial colouring $\psi_{(P,\preceq)}$ of $\mathcal{I}$ as described above.
A pillar assignment $(P,\preceq, \psi)$ is \emph{complete} if every interval of $\mathcal{I}$ contains some pillar of $P$ (or equivalently if $\psi$ colours every interval of $\mathcal{I}$).
For a pillar assignment $(P,\preceq, \psi)$, let $\chi(P,\preceq, \psi)$ be equal to $\left| \bigcup_{p\in P} C_p \right| =|\psi(\mathcal{I})|$, in other words $\chi(P,\preceq, \psi)$ is the number of colours that the pillar assignment $(P,\preceq, \psi)$ uses to colour its interval system $\mathcal{I}$. So if $(P,\preceq, \psi)$ is a complete pillar assignment, then $\chi(\mathcal{I}) \le \chi(P,\preceq, \psi)$. By the above definition and discussion, we have the following.

\begin{lemma}\label{pillarcolouring}
	Let $(P,\preceq,\psi)$ be a pillar assignment of an interval system $\mathcal{I}$. Then $\psi$ is a proper partial colouring of $\mathcal{I}$ that colours every interval containing a pillar of $P$, and furthermore if the pillar assignment is complete then $\psi$ is a complete proper colouring of the interval system $\mathcal{I}$, and so $\chi(\mathcal{I}) \le \chi(P,\preceq, \psi)$.
\end{lemma}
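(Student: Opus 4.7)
The proof is essentially an exercise in unpacking the inductive construction of $\psi$, so the plan is to induct on the pillars of $P$ in the total order $\preceq$, maintaining the invariant that $\psi_p$ is a proper partial colouring of $\mathcal{I}$ whose domain consists precisely of those intervals assigned to pillars $p' \preceq p$. The base case is immediate: for the first pillar $p$, every interval of $\mathcal{I}_p$ contains $p$, so Lemma~\ref{permutation colouring} directly gives that $\phi_p = \psi_p$ is a proper $C_p$-colouring of $\mathcal{I}_p$.

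The main work is in the inductive step: given that $\psi_{p^*}$ is already a proper partial colouring, I need to show that $\psi_p = \psi_{p^*} \cup \phi_p$ remains proper. Two kinds of conflict could arise. Firstly, two overlapping intervals both belonging to $\mathcal{I}_p$ cannot receive the same $\phi_p$-colour, because every interval of $\mathcal{I}_p$ contains $p$, so Lemma~\ref{permutation colouring} applies to $\mathcal{I}_p$ with point $p$ and yields a proper $C_p$-colouring. Secondly, an interval $I \in \mathcal{I}_p$ might overlap a previously coloured interval $J$ assigned to some $p' \prec p$, and receive the same colour as $J$; this is the main obstacle, and is ruled out by the choice of $C_p$ together with the following geometric observation.

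The key geometric claim is that every interval $I \in \mathcal{I}_p$ satisfies $I \subseteq F_p$. Indeed, $I$ is an open interval containing $p$ but no pillar $p' \prec p$, and the endpoints of $F_p$ are either $0$, $1$, or pillars preceding $p$; since pillars are distinct from interval endpoints, $I$ cannot extend past either endpoint of $F_p$. Now any previously coloured $J$ contains some earlier pillar $p' \prec p$, and $p' \notin F_p$, so $J$ has at least one endpoint outside $F_p$. If $J$ overlaps $I \subseteq F_p$, then $J$ must also meet $F_p$, but cannot contain $F_p$ (else $J \supseteq I$, precluding overlap); hence exactly one endpoint of $J$ lies in $F_p$, so $J \in \mathcal{F}_p$. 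By definition, $C_p$ was chosen disjoint from $\psi_{p^*}(\mathcal{F}_p)$, so $\phi_p(I) \neq \psi_{p^*}(J)$, and properness is preserved.

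Finally, any interval $I \in \mathcal{I}$ that contains at least one pillar of $P$ is assigned to the $\preceq$-minimum such pillar $p$, and is therefore coloured by $\phi_p$, hence by $\psi$. This establishes the statement that $\psi$ colours every interval containing a pillar. If the pillar assignment is complete, then every interval of $\mathcal{I}$ contains some pillar, so $\psi$ is a complete proper colouring using exactly $\chi(P,\preceq,\psi) = \left|\bigcup_{p \in P} C_p\right|$ colours, giving the bound $\chi(\mathcal{I}) \le \chi(P,\preceq,\psi)$.
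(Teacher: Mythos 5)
Your proof is correct and follows essentially the same line of reasoning as the paper's (very terse) justification, which simply points back to the remark made during the construction of $\psi_{(P,\preceq)}$ that $\mathcal{I}_p$ is contained in $F_p$ and so cannot overlap any interval whose colour lies in $C_p$, by the choice of $C_p$ as disjoint from $\psi_{p^*}(\mathcal{F}_p)$. You have merely unpacked that remark into a careful induction, verifying the containment $\mathcal{I}_p \subseteq F_p$ and the implication that an overlapping previously-coloured interval must lie in $\mathcal{F}_p$.
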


Next we analyse the endpoints of chords assigned to a given pillar, and also the endpoints of chords with a given colour in a pillar assignment.
These two lemmas are used in the proof of our tailor-made Tur{\'a}n-type result in Section 4.
An \emph{arch} of a pillar assignment $(P,\preceq, \psi)$ is an open interval with endpoints in $\{0,1\}\cup P$ that contains no pillar of $P$.

\begin{lemma}\label{chordsscomefrom}
	Let $(P,\preceq, \psi)$ be a pillar assignment of an interval system $\mathcal{I}$, let $K$ be an arch of $(P,\preceq, \psi)$, and let $\mathcal{I}_K$ be the intervals of $\mathcal{I}$ with exactly one endpoint in $K$. Let $\bigcup _{p\in P} \mathcal{I}_{(K,p)}$ be the partition of $\mathcal{I}_K$ where for each $p\in P$, the intervals $\mathcal{I}_{(K,p)}$ are exactly the intervals of $\mathcal{I}_K$ that are assigned to pillar $p$. Then there is a collection of disjoint intervals $\{K_p : p\in P\}$ contained in $(0,1)\backslash K$ such that for every $p\in P$, the intervals of $\mathcal{I}_{(K,p)}$ have an endpoint within $K_p$.
\end{lemma}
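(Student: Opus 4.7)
The plan is to pin down, for each pillar $p \in P$, where the non-$K$ endpoints of intervals in $\mathcal{I}_{(K,p)}$ can lie, and then define $K_p$ to be an interval containing exactly those endpoints.

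Write $K = (a,b)$. My first observation is that for any $I \in \mathcal{I}_{(K,p)}$, the non-$K$ endpoint of $I$ lies on the same side of $K$ as $p$. Indeed, $p \in I$ and $p \notin K$ (as $K$ contains no pillar), while one endpoint of $I$ lies in $K$, so $I$ must reach past $K$ on the side containing $p$. Concretely, when $p \geq b$ we must have $\ell(I) \in K$ and $r(I) > b$, and when $p \leq a$ we must have $r(I) \in K$ and $\ell(I) < a$.

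For a pillar $p \geq b$ with $\mathcal{I}_{(K,p)} \neq \emptyset$ I would then determine the range of $r(I)$. The pillars of $P$ contained in such an $I$ are exactly those in $[b, r(I))$, since $\ell(I) \in (a,b)$ excludes all pillars $\leq a$ and $K$ itself contains no pillars. The condition that $p$ is the $\preceq$-least pillar of $I$ then forces every pillar $p' \in [b, r(I))$ to satisfy $p' \succeq p$. Letting $r_p$ denote the first pillar in $(p,1)$ that is $\preceq$-earlier than $p$ (or $1$ if no such pillar exists), this yields $r(I) \in (p, r_p)$. I set $K_p := (p, r_p)$, and symmetrically $K_p := (\ell_p, p)$ when $p \leq a$ with $\mathcal{I}_{(K,p)} \neq \emptyset$.

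For disjointness, consider two right-side pillars $p < p'$ both with nonempty $\mathcal{I}_{(K,\cdot)}$. Pick any $I' \in \mathcal{I}_{(K,p')}$; then $p \in [b, r(I'))$ (since $p \geq b$ and $p < p' < r(I')$), so $p'$ being the $\preceq$-minimum of the pillars in $[b, r(I'))$ forces $p' \prec p$. Hence $r_p \leq p'$, and so $K_p \subset (p, p']$ is disjoint from $K_{p'} \subset (p', r_{p'})$. The symmetric argument handles the left side, while left and right $K_p$'s are trivially disjoint. For pillars with $\mathcal{I}_{(K,p)} = \emptyset$ the lemma's condition is vacuous, so I would assign each such pillar a tiny disjoint subinterval of $(0,1) \setminus K$, slightly shrinking the already-defined $K_p$'s if necessary to create room; this is always possible since $P$ and $\mathcal{I}$ are finite.

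The main obstacle is verifying the interaction between the $\preceq$-order and the physical left-to-right order---in particular the step that $p' \prec p$ for right-side pillars with assigned intervals, which is what drives the disjointness. The boundary cases where $a$ or $b$ is itself a pillar cause no trouble, since then such a pillar is automatically treated as a left- or right-side pillar by the sign conventions used above.
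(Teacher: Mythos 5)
Your proof is correct, and its structure is genuinely different from the paper's. The paper argues by contradiction: it assumes no collection of disjoint intervals exists, deduces that there must be two pillars $p, p'$ and intervals $I_1, I_2 \in \mathcal{I}_{(K,p)}$, $I' \in \mathcal{I}_{(K,p')}$ whose non-$K$ endpoints interleave as $e_1 < e' < e_2$ on one side of $K$, and then derives a contradiction from the assignment rule. You instead proceed constructively: you exhibit explicit intervals $K_p = (p, r_p)$ (resp.\ $(\ell_p, p)$) and verify disjointness directly. Both proofs rest on the same structural observation --- for pillars on the same side of $K$ with nonempty $\mathcal{I}_{(K,\cdot)}$, the pillar order $\preceq$ is \emph{reversed} relative to distance from $K$ (in your notation, $p < p'$ on the right forces $p' \prec p$) --- but you use it to bound where $r(I)$ can fall, whereas the paper uses it to contradict the assignment of $I'$. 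Your route also avoids the slightly compressed step in the paper where ``no such disjoint collection exists'' is translated into the existence of an interleaving triple, which you sidestep entirely by construction. The small clean-up for pillars with $\mathcal{I}_{(K,p)} = \emptyset$ is handled adequately (the condition there is vacuous, and finitely many tiny disjoint intervals can always be accommodated); one could also simply drop those pillars from the collection without loss.
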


\begin{proof}
	Let $K=(k^-,k^+)$. First observe that for each pillar $p\in P$, the intervals of $\mathcal{I}_{(K,p)}$ must all be contained in either $(k^-,1)$ or $(0,k^+)$ depending on if the pillar $p$ is contained in $[k^+,1)$ or $(0,k^-]$.
	
	Now suppose for sake of contradiction that no such collection of disjoint intervals $ \{K_p : p\in P\}$ exist. Then there must exist two distinct pillars $p,p'$ and distinct intervals $I_1, I_2 \in \mathcal{I}_{(K,p)}$, $I'\in \mathcal{I}_{(K,p')}$ such that the endpoints $e_1,e_2.e'$ of $I_1,I_2,I'$ respectively that are contained in $(0,1)\backslash K$ are such that $e_1 < e' < e_2$, and either $e_1 < e' < e_2 < k^-$, or $k^+ < e_1 < e' < e_2$.
	
	Suppose in the first case that $e_1 < e' < e_2 < k^-$. Then $I'\backslash K$ must contain $p'$, and furthermore both $I_1\backslash K$ and $I_2 \backslash K$ must contain $p$. Hence $I_1\backslash K$ contains both $p$ and $p'$. As $I_1$ is assigned to $p$, we see that $p\prec p'$. Then $I'$ does not contain $p$ as $I'$ is assigned to $p'$ and $p\prec p'$. But this contradicts the fact that $I_2\backslash K \subset I'\backslash K$ contains $p$.
	The second case that $k^+ < e_1 < e' < e_2$ is argued similarly and we conclude that such a collection of disjoint intervals $ \{K_p : p\in P\}$ exists.
\end{proof}

\begin{lemma}\label{samecolourchords}
	Let $(P,\preceq, \psi)$ be a pillar assignment of an interval system $\mathcal{I}$, let $K$ be an arch of $(P,\preceq, \psi)$, and let $\mathcal{I}_K$ be the intervals of $\mathcal{I}$ with exactly one endpoint in $K$.
	Let $\bigcup _{p\in P} \mathcal{I}_{(K,p)}$ be the partition of $\mathcal{I}_K$ where for each $p\in P$, the intervals $\mathcal{I}_{(K,p)}$ are exactly the intervals of $\mathcal{I}_K$ that are assigned to pillar $p$.
	For each $c\in \psi(\mathcal{I})$, let $\mathcal{I}_{(K,c)}$ be the intervals of $\mathcal{I}_K$ that are coloured $c$ by $\psi$.
	Then for each $c\in \psi(\mathcal{I})$, there is a pillar $p\in P$, such that $\mathcal{I}_{(K,c)} \subseteq \mathcal{I}_{(K,p)}$.
\end{lemma}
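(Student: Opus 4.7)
The plan is to argue by contradiction: suppose some colour $c$ is shared by two intervals $I_1,I_2\in \mathcal{I}_K$ that are assigned to distinct pillars $p_1,p_2\in P$, and without loss of generality take $p_1\prec p_2$. I want to use the defining property of $C_{p_2}$, namely that it avoids all colours already used on intervals in $\mathcal{F}_{p_2}$, to force a contradiction by showing that $I_1$ must lie in $\mathcal{F}_{p_2}$.

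The first key step is to show that the arch $K$ is contained in the foundation $F_{p_2}$ of $p_2$. Since $I_2$ is assigned to $p_2$, it contains $p_2$ but no pillar $\prec p_2$; since $I_2$ also has one endpoint in the pillar-free arch $K$, the interval $I_2$ connects $p_2$ to a point of $K$ without crossing any pillar $\prec p_2$. By the definition of the foundation as the maximal open interval around $p_2$ containing no pillar $\prec p_2$, this places $K$ on the same side of any such bounding pillar as $p_2$, and hence $K\subseteq F_{p_2}$.

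Next I look at $I_1$. Because $p_1\prec p_2$, the pillar $p_1$ is not in $F_{p_2}$, but $p_1\in I_1$, so $I_1$ must leave $F_{p_2}$. On the other hand $I_1$ has exactly one endpoint in $K\subseteq F_{p_2}$, so that endpoint lies in $F_{p_2}$. If the other endpoint of $I_1$ also lay in $F_{p_2}$, then $I_1$ would be contained in the open interval $F_{p_2}$, contradicting $p_1\in I_1\setminus F_{p_2}$. Hence $I_1$ has exactly one endpoint in $F_{p_2}$, i.e. $I_1\in \mathcal{F}_{p_2}$.

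The final step is routine: $I_1$ is coloured $c$ by $\phi_{p_1}$, and since $p_1\preceq p_2^\ast$ this colouring is retained in $\psi_{p_2^\ast}$, so $c\in \psi_{p_2^\ast}(\mathcal{F}_{p_2})$. But $I_2$ was coloured $c$ by $\phi_{p_2}$, so $c\in C_{p_2}$, and by construction $C_{p_2}$ is disjoint from $\psi_{p_2^\ast}(\mathcal{F}_{p_2})$, which is the desired contradiction. The only mildly subtle point is the geometric claim $K\subseteq F_{p_2}$; everything else is just unwinding the recursive definition of the pillar colouring.
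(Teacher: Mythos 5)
Your proposal is correct and follows essentially the same route as the paper: pick $I_1,I_2\in\mathcal{I}_K$ with the same colour assigned to $p_1\prec p_2$, show $I_2\subseteq F_{p_2}$ and hence $K\subseteq F_{p_2}$, and conclude that $I_1\in\mathcal{F}_{p_2}$ contradicts the choice of $C_{p_2}$. Your write-up is if anything slightly more explicit than the paper's, in particular in spelling out that $I_1$ has \emph{exactly} one endpoint in $F_{p_2}$ (since $p_1\in I_1\setminus F_{p_2}$ rules out $I_1\subseteq F_{p_2}$), a small point the paper's proof leaves implicit.
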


\begin{proof}
	Suppose not, then there must exist intervals $I_1,I_2\in \mathcal{I}_K$ such that $\psi(I_1) = \psi(I_2)$ and $I_1,I_2$ are assigned to distinct pillars $p_1,p_2$. In particular this means that $\phi_{p_1}(I_1)= \phi_{p_2}(I_2)$ as in the definition of the colouring $\psi=\psi_{(P,\preceq)}$.
	
	Without loss of generality, we may assume that $p_1 \prec p_2$.
	Then the foundation $F_{p_2}$ of $p_2$ must contain $I_2$, and so $F_{p_2}$ contains $K$ as well.
	Hence the foundation $F_{p_2}$ contains an endpoint of $I_1$.
	But this now contradicts the choice of $C_{p_2}$ by the definition of the colouring $\psi= \psi_{(P,\preceq)}$.
\end{proof}

Next we define a notion for the degree of an interval $J$ contained within an arch of a pillar assignment $(P,\preceq, \psi)$ of some interval system $\mathcal{I}$. It is this notion of degree that our tailor-made Tur{\'a}n-type result is based on.

For an interval $J$ within an arch of a pillar assignment $(P,\preceq, \psi)$ of an interval system $\mathcal{I}$, the \emph{degree} $d_{(P,\preceq, \psi)}(J)$ of $J$ is equal to the number of colours that intervals of $\mathcal{I}$ with an endpoint in $J$ receive from $\psi$.
As an example, for the pillar assignment $(P,\preceq, \psi)$ depicted in Figure~\ref{fig:pillB}, $d_{(P,\preceq, \psi)}(p_3,p_4)=5$, and $d_{(P,\preceq, \psi)}((p_3,p_4), \{p_1,p_2,p_3\})= 2 + 0 + 1 = 3$.
When the pillar assignment is clear from context we often omit the subscript on the chromatic and clique degrees.

A pillar assignment $(P^*,\preceq^*, \psi^*)$ \emph{extends} a pillar assignment ${(P,\preceq, \psi)}$ if $P\subset P^*$, every pillar of $P$ precedes every pillar of $P^*\backslash P$ in $\preceq^*$ and ${(P^*, \preceq^*)|_P = (P,\preceq)}$, and $\psi^*$ is a proper partial colouring that extends $\psi$. We remark that by definition, the last condition that $\psi^*$ extends $\psi$ is implied by the conditions on $(P^*,\preceq^*)$, because for the interval system $\mathcal{I}$, the colourings $\psi=\psi_{(P,\preceq )}$ and $\psi^*=\psi_{(P^*,\preceq^*)}$ are determined solely by the totally ordered pillars $(P,\preceq )$ and $(P^*,\preceq^*)$ respectively.

We finish this section with a divide and conquer lemma that under favourable conditions allows for a certain extension of a pillar assignment that maintains a low total number of colours used and low degree arches.

\begin{lemma}\label{devide and conquer}
	Let $(P,\preceq, \psi)$ be a pillar assignment of an interval system $\mathcal{I}$ with $\omega(\mathcal{I})=\omega$, let $K$ be an arch, let $t$ be a positive integer, and let $Q \subset K$ be a finite collection of pillars such that $d_{(P,\preceq, \psi)}(J) \le t$ for every interval $J$ contained in $K\backslash Q$.
	Then there is a pillar assignment $(P^*,\preceq^*, \psi^*)$ extending $(P,\preceq, \psi)$ such that:
	\begin{itemize}
		\item $P^*=P\cup Q$,
		
		\item $d_{(P^*,\preceq^*, \psi^*)}(J) \le t+ \omega  \lceil \log_2(|Q|+1) \rceil$ for every interval $J \subset K\backslash Q$, and
		
		\item $\chi(P^*,\preceq^*, \psi^*) \le \max\{\chi(P,\preceq, \psi), \  d_{(P,\preceq, \psi)}(K) + \omega  \lceil \log_2(|Q| + 1) \rceil\}$.
	\end{itemize}
\end{lemma}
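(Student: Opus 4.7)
My plan is to prove this by induction on $|Q|$. The base case $|Q|=0$ is immediate: take $P^*=P$ and $\psi^*=\psi$; the bounds hold since $\lceil\log_2 1\rceil=0$ and $\chi(P,\preceq,\psi)\ge d_{(P,\preceq,\psi)}(K)$. For the inductive step, let $k=\lceil\log_2(|Q|+1)\rceil$ and choose a near-median pillar $q\in Q$ so that each of the two sub-arches $K_1=(k^-,q)$ and $K_2=(q,k^+)$ contains at most $2^{k-1}-1$ pillars of $Q\setminus\{q\}$; this is always possible because $|Q|-1\le 2^k-2=2(2^{k-1}-1)$. The purpose of this choice is that $\lceil\log_2(|Q_i|+1)\rceil\le k-1$ for $Q_i=Q\cap K_i$, so the logarithm drops by exactly one at each level of the recursion. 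I would then extend the pillar assignment in three stages: first append $q$ immediately after $P$ in the order to obtain $(P\cup\{q\},\preceq',\psi')$, then recursively add $Q_1$ inside $K_1$, and then recursively add $Q_2$ inside $K_2$.

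Two structural observations make this recursion behave well. First, the foundation of $q$ equals $K$ (since $K$ is an arch of $(P,\preceq,\psi)$ and $q$ is the very next pillar added), so $\mathcal{F}_q=\mathcal{I}_K$; by Lemma~\ref{permutation colouring}, $\phi_q$ uses at most $\omega(\mathcal{I}_q)\le\omega$ new colours, each chosen from the smallest positive integers missing from $\psi(\mathcal{I}_K)$ and hence each at most $d_{(P,\preceq,\psi)}(K)+\omega$. Second, for any pillar $q'\in Q_1$ added later, the foundation $F_{q'}$ lies inside $K_1$ (because $q$ is already a preceding pillar to the right of $q'$), so every interval assigned to a pillar of $Q_1$ is contained in $K_1$; the same holds symmetrically for $Q_2$ and $K_2$. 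This locality guarantees that processing $Q_1$ does not alter the $\psi$-degree at endpoints in $K_2$ and vice versa.

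For the degree bound, fix $J\subset K\setminus Q$ and note that $J\subset K_i\setminus Q_i$ for some $i$. After appending $q$, the degree grows by at most $|C_q|\le\omega$, giving $d_{(P\cup\{q\},\preceq',\psi')}(J)\le t+\omega$; this verifies the hypothesis of the inductive call on $K_i$ with parameter $t'=t+\omega$, which yields $d^*(J)\le(t+\omega)+\omega(k-1)=t+\omega k$. By locality, the recursion on the other sub-arch does not touch this quantity. For the chromatic bound, appending $q$ gives $\chi(P\cup\{q\},\preceq',\psi')\le\max\{\chi(P,\preceq,\psi),\, d(K)+\omega\}$. Each recursive call on $K_i$ then adds at most $\omega(k-1)$ above its own starting degree on $K_i$; at each such moment that degree is at most $d(K)+\omega$ (the $\psi$-colours contribute at most $d(K)$ and the $\phi_q$-colours contribute at most $\omega$, with locality ensuring that colouring $Q_1$ introduces no new colours at endpoints in $K_2$). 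The three bounds collapse to $\max\{\chi(P,\preceq,\psi),\, d(K)+\omega k\}$.

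The main obstacle is precisely this bookkeeping across the two recursive calls: one must verify that the degree on $K_2$ remains at most $d(K)+\omega$ at the moment the $K_2$-recursion begins, which relies on the locality observation that foundations of pillars in $Q_1$ lie in $K_1$. Combined with the balanced median choice for $q$, which forces the $\lceil\log_2\rceil$ term to decrease by one per recursion level, these are the two ingredients that produce the stated $\omega\lceil\log_2(|Q|+1)\rceil$ overhead.
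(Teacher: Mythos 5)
Your proposal is correct and follows essentially the same route as the paper's proof: induction on $|Q|$, adding the (near-)median pillar $q$ immediately after $P$, and then observing that the two sub-arches $K_1,K_2$ can be processed independently because foundations of later pillars inside $K_i$ are confined to $K_i$. The paper chooses the exact median index $q_{\lceil(n-1)/2\rceil}$ and does the same bookkeeping via $\max\{\lceil(n-1)/2\rceil,\,n-\lceil(n-1)/2\rceil\}\le 2^{k-1}$, which is the same balance condition you extract.
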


\begin{proof}
	Firstly the result is trivially true if $|Q|= 0$. So from here we argue inductively on $|Q|$.
	
	Let the endpoints of $K$ be $q_0$ and $q_n$, with $q_0<q_n$ and $n=|Q|+1$. Then let the elements of $Q$ be $\{q_1,\dots, q_{n-1}\}$ where $q_1 < \dots <q_{n-1}$.
	Consider the pillar assignment $(P',\preceq', \psi')$ extending $(P,\preceq, \psi)$ that is obtained by adding the pillar $q_{\lceil \frac{n-1}{2}  \rceil}$ immediately after all the pillars of $P$ in the total ordering $(P,\preceq)$.
	
	Then with respect to the pillar assignment $(P',\preceq', \psi')$, the interval $K$ contains exactly two arches; $K_1=(q_0, q_{\lceil \frac{n-1}{2}  \rceil})$, and $K_2=(q_{\lceil \frac{n-1}{2}  \rceil}, q_n)$.
	By considering the colouring $\phi_{q_{\lceil \frac{n-1}{2}  \rceil}} = \psi_{(P',\preceq')} \backslash \psi_{(P,\preceq)}$, we can observe that:
	\begin{itemize}
		\item $d_{(P',\preceq', \psi')}(K_1), \  d_{(P',\preceq', \psi')}(K_2) \le d_{(P,\preceq, \psi)}(K) + \omega$,
		
		\item $\chi(P',\preceq', \psi') \le {\max\{\chi(P,\preceq, \psi), \  d_{(P,\preceq, \psi)}(K) + \omega  \}}$, and
		
		\item ${d_{(P',\preceq', \psi')}(q_i,q_{i-1}) \le t + \omega}$ for every $i\in [n]$.
	\end{itemize}
	
	Next note that respect to the colouring, extending the pillar assignment $(P',\preceq', \psi')$ within each of the arches $K_1$ and $K_2$ is independent of the other.
	So we may apply the result of the inductive hypothesis twice, once to the pillars $\{q_1,\dots, q_{\lceil \frac{n-1}{2}  \rceil -1}\}\subset K_1$, and then to the pillars $\{q_{\lceil \frac{n-1}{2}  \rceil +1},\dots q_{n-1}\}\subset K_2$, to obtain a new pillar assignment ${(P^*,\preceq^*, \psi^*)}$ extending  $(P',\preceq', \psi')$ (and so also extending $(P,\preceq, \psi)$).
	Furthermore the resulting pillar assignment $(P^*,\preceq^*, \psi^*)$ is such that: $P^*=P'\cup \{q_1,\dots, q_{\lceil \frac{n-1}{2}  \rceil -1}\} \cup \{q_{\lceil \frac{n-1}{2}  \rceil +1},\dots q_{n-1}\} = P \cup Q$, and
	for each $i\in [n]$,
	\begin{align*}
	d_{(P^*,\preceq^*, \psi^*)}(q_{i-1}, q_i)
	&\le t+ \omega + \omega \left\lceil \log_2\left( \max \left\{ \left\lceil \frac{n-1}{2}  \right\rceil  , \  n - \left\lceil \frac{n-1}{2}  \right\rceil \right\} \right) \right\rceil \\
	&\le t+ \omega \lceil \log_2(n) \rceil \\
	&= t+ \omega  \lceil \log_2(|Q|+1) \rceil,
	\end{align*}
	and lastly,
	\begin{align*}
	\chi(P^*,\preceq^*, \psi^*)
	&\le \max\left\{
	\begin{aligned}
	& \qquad \qquad \qquad \  \chi(P',\preceq', \psi'), \\
	& \ \  d_{(P',\preceq', \psi')}(K_1) + \omega  \left\lceil \log_2\left( \left\lceil \frac{n-1}{2}  \right\rceil \right) \right\rceil, \\
	&d_{(P',\preceq', \psi')}(K_2) + \omega  \left\lceil \log_2 \left( n - \left\lceil \frac{n-1}{2}  \right\rceil \right) \right\rceil
	\end{aligned}
	\right\} \\
	&\le \max\left\{\chi(P,\preceq, \psi), \  d_{(P,\preceq, \psi)}(K) +  \omega \lceil \log_2(n) \rceil \right\} \\
	&= \max\left\{\chi(P,\preceq, \psi), \ d_{(P,\preceq, \psi)}(K) +  \omega \lceil \log_2(|Q|+1) \rceil \right\}.
	\end{align*}
	Hence $(P^*,\preceq^*, \psi^*)$ provides the desired pillar assignment.
\end{proof}

\section{Extremal results}

In this section we prove the Tur{\'a}n-type lemma that is tailor-made for our notion of pillar assignment and degree (Lemma~\ref{extremal}). The purpose of this is to enable prudent usage of Lemma~\ref{devide and conquer} in the proof of our main result, a strengthening of Theorem~\ref{main} that concerns extending pillar assignments.
The idea of Lemma~\ref{extremal} is based on a Tur{\'a}n-type theorem of Capoyleas and Pach~\cite{capoyleas1992turan} for circle graphs. For an interval system $\mathcal{I}$ and a collection of disjoint intervals $\mathcal{J}$, the Tur{\'a}n-type theorem of Capoyleas and Pach~\cite{capoyleas1992turan} bounds (in terms of $\omega(\mathcal{I})$ and $|\mathcal{J}|$) the number of pairs of distinct intervals $J_1,J_2\in \mathcal{J}$ such that there is an interval of $\mathcal{I}$ with an endpoint in both $J_1$ and $J_2$.

First we need to prove Theorem~\ref{estype}, a theorem in a similar style to that of the Erd\H{o}s-Szekeres theorem~\cite{erdos1935combinatorial}. This result may be of independent interest. Indeed it can be shown that Theorem~\ref{estype} is equivalent to a tight version of the Tur{\'a}n-type lemma used in~\cite{daviescircle}, so it can also be considered an exact permutation graph analogue of the aforementioned Tur{\'a}n-type theorem of Capoyleas and Pach~\cite{capoyleas1992turan}. With a bit more care one can even characterise the extremal examples.

Before stating and proving Theorem~\ref{estype}, we first require two definitions and a simple lemma.
Given some $S\subseteq \mathbb{R}^d$, we define the \emph{strong dominance partial ordering} $\preceq_{sd}$ of $S$ to be the partial order such that $u\preceq_{sd} v$ exactly when each coordinate of $v$ is greater than the corresponding coordinate of $u$ (or $u=v$). Given two sets $A$ and $B$, we let $A \times B$ denote the \emph{Cartesian product} $\{(a,b) : a\in A \text{ and } b\in B\}$, of $A$ and $B$.

\begin{lemma}\label{gridantichain}
	Let $a,b$ be positive integers, and let $\preceq_{sd}$ be the strong dominance partial ordering of $[a]\times [b]$.
	Then the maximum length of an antichain in $([a]\times [b],\preceq_{sd})$ is equal to $a+b-1$.
\end{lemma}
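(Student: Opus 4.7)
The plan is to prove matching upper and lower bounds of $a+b-1$ on the maximum antichain length.

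For the lower bound, I would exhibit an explicit antichain of size $a+b-1$. A natural choice is the ``L-shape''
\[
A=\{(i,1):i\in[a]\}\cup\{(1,j):j\in[b]\},
\]
which has $a+b-1$ elements since $(1,1)$ is counted once. To check it is an antichain under $\preceq_{sd}$, note that any two distinct points in the first set agree in the second coordinate, so neither can strongly dominate the other; likewise for the second set. If $(i,1)$ with $i>1$ and $(1,j)$ with $j>1$ are chosen, then $i>1$ while $1<j$, so the coordinates move in opposite directions and the two points are again incomparable.

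For the upper bound, I would partition $[a]\times[b]$ into $a+b-1$ chains under $\preceq_{sd}$ and then observe that an antichain can contain at most one element from each chain. The natural chains are the ``diagonals''
\[
C_k=\{(i,j)\in[a]\times[b]:i-j=k\}, \qquad k\in\{-(b-1),-(b-2),\dots,a-1\}.
\]
If $(i_1,j_1),(i_2,j_2)\in C_k$ with $i_1<i_2$, then $j_1=i_1-k<i_2-k=j_2$, so both coordinates strictly increase and the two points are comparable; hence each $C_k$ is a chain. The sets $C_k$ clearly partition $[a]\times[b]$, and there are exactly $(a-1)-(-(b-1))+1=a+b-1$ of them. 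Since an antichain meets each chain in at most one element, every antichain has size at most $a+b-1$.

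Neither direction should present any real obstacle; the only mild subtlety is the off-by-one in the indexing of the diagonals $C_k$ and the observation that the L-shape is counted correctly without double-counting $(1,1)$. Combining the two bounds yields the claimed value $a+b-1$.
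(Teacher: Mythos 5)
Your proof is correct and takes essentially the same approach as the paper: a diagonal chain cover of size $a+b-1$ for the upper bound and an explicit L-shaped antichain for the lower bound. The only cosmetic difference is that the paper's exhibited antichain hugs the ``top-right'' corner, $\{(a,j):j\in[b]\}\cup\{(i,b):i\in[a-1]\}$, while yours hugs the ``bottom-left''; both work equally well.
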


\begin{proof}
	Let $A$ be the antichain $\{(a,j) : j\in [b]\} \cup \{(i,b) : i\in [a-1]\}$, then $|A|=a+b-1$. For each integer $k$ with $1-b \le k \le a -1$, let $C_k=\{(x,y)\in  [a]\times [b] : x-y =k \}$. Then $C_{1-b}, \dots , C_{a-1}$ is a chain cover of $[a]\times [b]$ of size $a+b-1$. An antichain contains at most one elements of every chain in a chain cover.
	Hence the maximum length of an antichain in $([a]\times [b],\preceq_{sd})$ is equal to $|A|=a+b-1$ as required.
\end{proof}

\begin{theorem}\label{estype}
	Let $a, b, n$ be positive integers with $n\le a, b$, and let $\prec_{sd}$ be the strong dominance partial ordering of $[a]\times [b]$.
	Let $S\subseteq [a]\times [b]$ be a set containing no chain of length greater than $n$.
	Then $|S| \le  n(a+b -n)$.
\end{theorem}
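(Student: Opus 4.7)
The plan is to partition $S$ by heights in the induced sub-poset and apply Lemma~\ref{gridantichain} to each level. Let $h_S(x)$ denote the height of $x$ in $(S, \preceq_{sd})$, and for each $i \in [n]$ set $S_i = \{x \in S : h_S(x) = i\}$. Since the longest chain in $S$ has length at most $n$, the sets $S_1, \ldots, S_n$ partition $S$, and each $S_i$ is an antichain in $(S, \preceq_{sd})$ and hence also in $([a]\times[b], \preceq_{sd})$.

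The crucial geometric observation is that every element of $S_i$ is confined to a smaller sub-grid. If $x = (p,q) \in S_i$, then there is a chain $x_1 \prec_{sd} x_2 \prec_{sd} \cdots \prec_{sd} x_i = x$ in $S$. Along such a chain both coordinates strictly increase, and since all coordinates are positive integers we must have $p \ge i$ and $q \ge i$. Therefore $S_i \subseteq [i,a]\times[i,b]$, and this sub-grid is order-isomorphic to $[a-i+1]\times[b-i+1]$ under strong dominance (translation preserves the order). Applying Lemma~\ref{gridantichain} to this sub-grid gives
\[
|S_i| \le (a - i + 1) + (b - i + 1) - 1 = a + b - 2i + 1.
\]

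Summing over all levels and computing the arithmetic series yields
\[
|S| = \sum_{i=1}^{n} |S_i| \le \sum_{i=1}^{n} (a + b - 2i + 1) = n(a + b + 1) - n(n+1) = n(a + b - n),
\]
as required. There is no real obstacle here: once one notices that height in the induced poset forces an element away from the lower-left corner of the grid, the bound falls out immediately from Lemma~\ref{gridantichain} and a one-line arithmetic computation. The only subtlety worth flagging is to explicitly justify translation-invariance of the antichain bound, so that Lemma~\ref{gridantichain} applies to $[i,a]\times[i,b]$ rather than only to grids of the form $[a']\times[b']$ starting at $1$.
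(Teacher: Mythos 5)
Your proof is correct and takes essentially the same approach as the paper: decompose $S$ into antichains by height, observe that every element of height $i$ has both coordinates at least $i$ so that level $i$ lives in a translated $[a-i+1]\times[b-i+1]$ grid, apply Lemma~\ref{gridantichain}, and sum. The only cosmetic difference is that the paper sums over $k=1,\dots,m$ (where $m$ is the actual maximum chain length) and then uses $m(a+b-m)\le n(a+b-n)$, whereas you sum directly to $n$ allowing empty levels, which is equally valid and marginally shorter.
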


\begin{proof}
	Let $m$ be the maximum length of a chain contained in $(S,\preceq_{sd})$, and let $A_1,\dots, A_m$ be the antichains cover of $S$ where $A_k=\{(x,y)\in S : h_{(S,\preceq_{sd})}(x,y) =k  \}$ for each $k\in [m]$.
	Then for each $k\in [m]$, and each $(x,y)\in A_k$, there exists a chain $C_{(x,y)}$ of length $k$ ending in $(x,y)$. This implies that $x,y \ge k$. So for each $k\in [m]$, the antichain $A_k$ is contained in the grid $([a]\backslash[k-1]) \times ([b]\backslash [k-1])$. Then by Lemma~\ref{gridantichain}, $|A_k|\le (a-k+1) + (b -k + 1) -1= a+b -2k +1$ for every $k\in [m]$.
	
	Lastly
	\[
	|S|
	=  \sum_{k=1}^{m} |A_k| 
	\le \sum_{k=1}^{m} \left( a+b -2k + 1 \right)
	= m(a+b -m)
	\le n(a+b -n)
	\]
	as desired.
\end{proof}

The bound in this theorem is tight: one extremal example is $\{(x,y)\in [a]\times [b] : x \le n \text{ or } y \le n \}$, which contains no chain of length greater than $n$. The theorem can also be generalised to higher dimensional grids with essentially the same proof. We anticipate that Theorem~\ref{estype} will likely also find further applications in proving improved $\chi$-bounding functions for other classes of geometric intersection graphs. 

We now proceed with applying Theorem~\ref{estype} to prove our tailor-made Tur{\'a}n-type lemma.

\begin{lemma}\label{extremal}
	Let $(P,\preceq, \psi)$ be a pillar assignment of an interval system $\mathcal{I}$ with $\omega(\mathcal{I})=\omega$, let $K$ be an arch such that $d(K)\ge \omega$, and
	let $\mathcal{J}$ be a collection of disjoint open intervals contained within $K$ such that $|\mathcal{J}| \ge  \omega$.
	Then
	\[
	\sum_{J\in \mathcal{J}}d(J) \le   \omega(d(K) +|\mathcal{J}| - \omega   ).
	\]
\end{lemma}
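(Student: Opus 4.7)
The plan is to construct a set $S \subseteq [d(K)] \times [|\mathcal{J}|]$ of size $\sum_{J\in \mathcal{J}} d(J)$ that contains no chain of length greater than $\omega$ under the strong dominance order, and then invoke Theorem~\ref{estype} with $a = d(K)$, $b = |\mathcal{J}|$, $n = \omega$ (the hypotheses $\omega \le d(K)$ and $\omega \le |\mathcal{J}|$ are given). Enumerate $\mathcal{J}$ as $J_1 < J_2 < \cdots < J_m$ from left to right. For each pair (colour $c$, index $j$) such that some interval of $\mathcal{I}_K$ coloured $c$ has an endpoint in $J_j$, include $(\sigma(c), j)$ in $S$, where $\sigma$ is a total ordering of the $d(K)$ colours that appear on intervals with an endpoint in $K$. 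Clearly $|S| = \sum_j d(J_j)$, so everything hinges on defining $\sigma$ so that the chain condition holds.

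The ordering $\sigma$ is built from the pillar structure. By Lemma~\ref{samecolourchords}, every such colour $c$ has a single associated pillar $p(c) \in P$ to which all colour-$c$ intervals of $\mathcal{I}_K$ are assigned, and by Lemma~\ref{chordsscomefrom} these intervals all have their non-$K$ endpoint in a distinguished interval $K_{p(c)} \subset (0,1)\setminus K$, with the various $K_p$'s being pairwise disjoint. I order the relevant pillars by first listing those with $K_p$ to the left of $K$ in left-to-right order of $K_p$, then those with $K_p$ to the right of $K$ in left-to-right order. Then $\sigma$ ranks colours firstly by pillar in this order, and secondly, within each $C_p$, by integer value (equivalently, by height in the partial order used in Lemma~\ref{permutation colouring}).

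The central claim is that a chain $(\sigma(c_1), j_1) \prec_{sd} \cdots \prec_{sd} (\sigma(c_k), j_k)$ in $S$ yields a clique of size $k$ in the overlap graph of $\mathcal{I}$, forcing $k \le \omega$. For each $i$ choose $I_i \in \mathcal{I}_K$ with $\psi(I_i) = c_i$ and an endpoint in $J_{j_i}$. Partition the indices according to the pillar $p(c_i)$: inside each block the colours are strictly increasing in integer order and, because $j_i$ is increasing and the endpoint of $I_i$ lying in $K$ is its left endpoint when $p(c_i)$ is to the right of $K$ (and its right endpoint when $p(c_i)$ is to the left), the relevant endpoints strictly increase as well. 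Applying Lemma~\ref{permutation colouring} within each block replaces the $I_i$'s by pairwise overlapping intervals $I_i^*$ whose in-$K$ endpoints still lie between those of the original first and last intervals of the block, and in particular still bracket the corresponding $J_{j_i}$'s. Across blocks, a short case analysis on where $p(c_i)$ and $p(c_{i'})$ sit relative to $K$ shows $I_i^*$ and $I_{i'}^*$ overlap for $i<i'$: when both pillars are on the same side, the $K_p$-ordering forces both $\ell(I_i^*) < \ell(I_{i'}^*)$ and $r(I_i^*) < r(I_{i'}^*)$; when the pillars are on opposite sides, $I_i^*$ crosses the left boundary of $K$ while $I_{i'}^*$ crosses the right, so the inequality is immediate.

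The main obstacle is choosing $\sigma$ so that every strong-dominance chain really does convert into a clique; once pillars are sorted by the position of $K_p$ (left-pillars before right-pillars) and colours are sorted by height within each pillar, Lemma~\ref{permutation colouring} handles same-pillar overlaps and the cross-pillar cases collapse to direct endpoint comparisons. The conclusion $\sum_{J\in \mathcal{J}} d(J) = |S| \le \omega(d(K) + |\mathcal{J}| - \omega)$ is then Theorem~\ref{estype} applied to this $S$.
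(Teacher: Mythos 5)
Your overall strategy---encoding the colour/interval incidences into a set $S\subseteq [d(K)]\times[|\mathcal{J}|]$, ordering the colours by the position of the associated pillar's interval $K_p$ and then within each pillar, and invoking Theorem~\ref{estype} together with Lemmas~\ref{chordsscomefrom}, \ref{samecolourchords} and~\ref{permutation colouring}---is exactly the paper's approach. However, the specific pillar ordering you choose is wrong, and this breaks the chain-to-clique step. You list the pillars with $K_p$ to the \emph{left} of $K$ before those with $K_p$ to the \emph{right}; the paper's ordering $\preceq_K$ does the opposite, placing right-side pillars before left-side ones (its third defining case is $K_{p_2}<K<K_{p_1}$). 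With your ordering the cross-side case fails: take $i<i'$ in a chain with $p(c_i)$ to the left of $K$ and $p(c_{i'})$ to the right. The chain condition forces $j_i<j_{i'}$, hence the $K$-endpoint of $I_i^*$, which is $r(I_i^*)$, lies strictly to the left of the $K$-endpoint of $I_{i'}^*$, which is $\ell(I_{i'}^*)$. So $r(I_i^*)<\ell(I_{i'}^*)$ and the two intervals are \emph{disjoint}, not overlapping. The phrase ``so the inequality is immediate'' hides the fact that the inequality points the wrong way.

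The fix is simply to reverse the cross-side comparison: rank colours associated with pillars on the right of $K$ before those on the left (your within-side left-to-right orderings are correct and should be kept). Then for $i<i'$ with $p(c_i)$ on the right and $p(c_{i'})$ on the left, $I_i^*$ has $\ell(I_i^*)\in K$ and $r(I_i^*)$ to the right of $K$, while $I_{i'}^*$ has $\ell(I_{i'}^*)$ to the left of $K$ and $r(I_{i'}^*)\in K$. The chain condition $j_i<j_{i'}$ gives $\ell(I_i^*)<r(I_{i'}^*)$, and combining with $\ell(I_{i'}^*)<\ell(K)<\ell(I_i^*)$ and $r(I_{i'}^*)<r(K)<r(I_i^*)$ yields $\ell(I_{i'}^*)<\ell(I_i^*)<r(I_{i'}^*)<r(I_i^*)$, so the two overlap. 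With this one change your argument coincides with the paper's proof.
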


\begin{proof}
	Let $\mathcal{I}_K$ be the intervals of $\mathcal{I}$ with exactly one endpoint in $K$. Let $\bigcup _{p\in P} \mathcal{I}_{(K,p)}$ be the partition of $\mathcal{I}_K$ where for each $p\in P$, the intervals $\mathcal{I}_{(K,p)}$ are exactly the intervals of $\mathcal{I}_K$ that are assigned to pillar $p$.
	Let $P'$ be the set of pillars $p \in P$ such that $\mathcal{I}_{(K,p)}$ is non-empty.
	Then by Lemma \ref{chordsscomefrom} there is a collection of disjoint intervals $\{K_p : p\in P'\}$ contained in $(0,1)\backslash K$ such that for every $p\in P'$, the intervals of $\mathcal{I}_{(K,p)}$ have an endpoint within $K_p$.
	Let $\preceq_{K}$ be the total ordering of $\{K_p : p\in P'\}$ so that $K_{p_1} \prec_K K_{p_2}$ exactly when either $K_{p_1} < K_{p_2} < K$, or $K < K_{p_1} < K_{p_2}$, or $K_{p_2}  < K  < K_{p_1}$.
	The key property of this total order is that if $p,p'$ are distinct pillars of $P'$ with $p\prec p'$, and $I,I'\in \mathcal{I}_K$ are intervals assigned to $p$ and $p'$ respectively, then $I$ overlaps with $I'$ if the endpoint of $I$ in that is contained in $K$ precedes precedes the endpoint of $I_2$ that is contained in $K$.

	Let $C'= \psi(\mathcal{I}_K)$.
	For each $c\in C'$, let $\mathcal{I}_{(K,c)}$ be the intervals of $\mathcal{I}_K$ that are coloured $c$ by $\psi$.
	By Lemma \ref{samecolourchords}, for each $c\in C'$, there exists a pillar $p\in P'$ such that $\mathcal{I}_{(K,c)} \subseteq \mathcal{I}_{(K,p)}$, and in particular every interval of $\mathcal{I}_{(K,c)}$ has an endpoint in $K_p$.
	Now let $\preceq_C$ be the total ordering of $C'$ such that for every $c_1,c_2\in C'$, we have that $c_1 \preceq c_2$ exactly when either there exists a pillar $p\in P'$ such that $\mathcal{I}_{(K,c_1)}, \mathcal{I}_{(K,c_2)} \subseteq \mathcal{I}_{(K,p)}$, and $c_1 \le c_2$, or there exists distinct pillars $p_1,p_2\in P'$ such that $\mathcal{I}_{(K,c_1)} \subseteq \mathcal{I}_{(K,p_1)}$,   $\mathcal{I}_{(K,c_2)} \subseteq \mathcal{I}_{(K,p_2)}$ and $p_1 \prec_K p_2$.
	Let $f:C' \to [d(K)]$ be the bijection such that $f(c_1)\le f(c_2)$ exactly when $c_1 \preceq_C c_2$.
	
	Now let $\mathcal{J} = \{J_1, \dots , J_{|\mathcal{J}|}\}$ where $J_1 < \cdots <  J_{|\mathcal{J}|} $.
	Next let $S$ be the set of all elements $(x,y)\in [d(K)] \times [|\mathcal{J}|]$ such that there is an interval $I$ of $\mathcal{I}$ that is coloured $f^{-1}(x)$ by $\psi$, and has an endpoint in $J_y$.
	Note that $|S| = \sum_{J\in \mathcal{J}}d(J)$.
	
	Suppose now for sake of contradiction that
	\[
	\sum_{J\in \mathcal{J}}d(J) >   \omega(d(K) +|\mathcal{J}| - \omega   ).
	\]
	Then by Theorem~\ref{estype}, there is a chain $W$ contained in $S$ of length at least $\omega + 1$.
	Since this chain is contained in $S$, there must exist colours $c_1 \prec_C \cdots \prec_C c_{\omega + 1}$ contained in $C'$, and integers $1\le x_1 < \cdots < x_{\omega + 1} \le |\mathcal{J}|$ so that for each $j\in [\omega + 1]$, there is an interval $I_j \in \mathcal{I}$ with an endpoint $e_j$ contained in $J_{x_j}$ and $\psi(I_j)=c_j$.
	Since $J_{x_1} < \cdots < J_{x_{\omega +1   }  }$, we have that $e_1 < \cdots < e_{\omega +1   } $.
	
	Let $p_1,\dots ,p_n\in P'$ be the collection of distinct pillars so that for some integers $0=a_0 < \dots < a_{n-1} < a_n = \omega +1 $, we have that for each $p_i$, the intervals $\mathcal{I}_{p_i}=\{I_j : a_{i-1} < j \le a_i\}$ are all assigned to pillar $p_i$.
	Note that $p_1 \prec_{K} \cdots \prec_{K} p_n$ by the definition of the total ordering $(C', \preceq_C)$.
	
	Then by Lemma \ref{permutation colouring} and the definition of the pillar assignment $(P,\preceq, \psi)$, for each $i\in [n]$, there exist pairwise overlapping intervals $I_{a_{i-1}+1}^*, \dots , I_{a_i}^*$ that are all assigned to the pillar $p_i$, and all have an endpoint contained in $[e_{a_{i-1}+1} , e_{a_i}]$.
	Since $p_1 \prec_{K} \dots \prec_{K} p_n$, and $[e_1, e_{a_1}] < \dots < [e_{a_{n-1}+1}, e_{\omega +1 }]$, we see that any two intervals of $\{I_1^*,\dots, I_{\omega +1}^*\}$ that are assigned to distinct pillars also overlap.
	Hence the intervals $I_1^*,\dots , I_{\omega +1}^*$ pairwise overlap, a contradiction to the fact that $\omega(\mathcal{I})=\omega$.
\end{proof}

\section{Main result}

By Lemma~\ref{pillarcolouring}, the following theorem strengthens and so implies Theorem~\ref{main}, that every circle graph with clique number at most $\omega$ has chromatic number at most $2\omega \log_2 (\omega) +2\omega \log_2(\log_2 (\omega)) + 10\omega$.

\begin{theorem}\label{induction}
	Let $\omega\ge 2$ be a positive integer,
	let $G$ be a circle graph with clique number at most $\omega$, and let $\mathcal{I}$ be an interval system with overlap graph $G$.
	Let $(P,\preceq, \psi)$ be a pillar assignment of $\mathcal{I}$ such that $\chi(P,\preceq, \psi) \le 2\omega \log_2 (\omega) +2\omega \log_2(\log_2 (\omega)) + 10\omega$, and $d(K)_{(P,\preceq, \psi)}\le \omega \log_2 (\omega) + \omega \log_2(\log_2 (\omega)) + 6\omega$ for every arch $K$ of $(P,\preceq, \psi)$.
	Then there is a complete pillar assignment $(P^*,\preceq^*, \psi^*)$ of $\mathcal{I}$ extending $(P,\preceq, \psi)$ with ${\chi(P^*,\preceq^*, \psi^*)} \le 2\omega\log_2 (\omega) +2\omega \log_2(\log_2 (\omega)) + 10\omega$.
\end{theorem}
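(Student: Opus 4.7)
The plan is to proceed by strong induction on the number $U$ of intervals of $\mathcal{I}$ that contain no pillar of $P$. The base case $U = 0$ is immediate, since the pillar assignment is then already complete by Lemma~\ref{pillarcolouring}. For the inductive step, I would pick an arch $K$ of $(P,\preceq,\psi)$ that contains at least one such uncovered interval and use Lemma~\ref{devide and conquer}, applied to a carefully chosen set $Q$ of pillars inside $K$, to produce an extension $(P',\preceq',\psi')$ that still satisfies both hypothesis bounds while strictly reducing $U$. Applying the inductive hypothesis to $(P',\preceq',\psi')$ would then yield the required complete extension.

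Writing $f(\omega) = \omega\log_2\omega + \omega\log_2\log_2\omega + 6\omega$ and $g(\omega) = 2f(\omega) - 2\omega$, the two invariants to preserve are $\chi \leq g(\omega)$ and $d(K') \leq f(\omega)$ on every arch $K'$. Lemma~\ref{devide and conquer} translates these into constraints on $Q$: the new chromatic number is at most $\max\{\chi(P,\preceq,\psi),\, d(K) + \omega\lceil\log_2(|Q|+1)\rceil\}$, and the new arches inside $K$ have degree at most $t + \omega\lceil\log_2(|Q|+1)\rceil$, where $t$ is an upper bound on $d(J)$ for every open interval $J \subset K \setminus Q$. Using $d(K) \leq f(\omega)$, the chromatic bound is preserved whenever $|Q| \leq 16\omega\log_2\omega - 1$; what is delicate is the degree bound, which when $|Q|$ is near this maximum forces $t$ to be as small as roughly $2\omega$.

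Let $\mathcal{U}_K$ denote the intervals of $\mathcal{I}$ contained in $K$ and let $\nu$ be the largest size of a pairwise disjoint subfamily of $\mathcal{U}_K$. By Helly's theorem for intervals there is a set of $\nu$ points in $K$ piercing every member of $\mathcal{U}_K$, which we may take as pillars by perturbing to avoid endpoints. When the slack $f(\omega) - d(K)$ is large, specifically when $\omega\lceil\log_2(\nu+1)\rceil \leq f(\omega) - d(K)$, I would simply set $Q$ to be this Helly piercing set and invoke Lemma~\ref{devide and conquer} with the trivial $t = d(K)$. This covers every interval of $\mathcal{U}_K$ at once while preserving every bound, and strictly reduces $U$ by $\nu \geq 1$.

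The main obstacle is the complementary case, when $\nu$ is large relative to the slack $f(\omega) - d(K)$. Here directly piercing $\mathcal{U}_K$ would push sub-arch degrees above $f(\omega)$, and we must instead choose $Q$ so that the components of $K \setminus Q$ have degree at most roughly $2\omega$. Lemma~\ref{extremal} is the tailor-made tool for this regime: applied to a maximum disjoint subfamily $\mathcal{J}\subseteq\mathcal{U}_K$ (which has size $\nu$, and we may assume $\nu \geq \omega$ since otherwise we fall into the easy case), it gives $\sum_{J \in \mathcal{J}} d(J) \leq \omega(d(K) + \nu - \omega)$, so the average of $d(J)$ over $J \in \mathcal{J}$ is close to $\omega$ when $\nu$ is large. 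I would exploit this to select a large subfamily of $\mathcal{J}$ each of whose members has $d(J)$ at most a small constant multiple of $\omega$, insert one pillar inside each such interval to cover it, and combine these with a modest number of additional ``splitting'' pillars chosen so that every component of $K\setminus Q$ has degree at most roughly $2\omega$. The technical heart of the argument is verifying that such a $Q$ exists with $|Q| \leq 16\omega\log_2\omega - 1$ and with the per-component bound $t \leq 2\omega$ simultaneously holding, while also covering at least one previously uncovered interval; this balancing is exactly what the tailor-made Lemma~\ref{extremal} is engineered to make possible.
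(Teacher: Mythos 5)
Your framework is right — induct on the number of uncovered intervals, pick an arch $K$ with an uncovered interval, and feed a suitable $Q$ into Lemma~\ref{devide and conquer} — and you correctly identify that the binding constraint forces $t$ to be roughly $2\omega$. But the proposal stops short precisely where the work is. Your easy case (small $\nu$, Helly piercing) is fine but not where the difficulty lies, and your hard case is only a sketch: you say you would ``select a large subfamily'' with small $d(J)$, ``combine these with a modest number of additional splitting pillars,'' and then assert that ``the technical heart of the argument is verifying that such a $Q$ exists.'' That verification is the proof, and it is not supplied. Moreover, your reduction to the hard case is not quite sound: $\nu<\omega$ does not by itself put you in the easy case, since the slack $f(\omega)-d(K)$ can be zero while $\omega\lceil\log_2(\nu+1)\rceil\geq\omega$.

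The paper avoids the case split (and Helly) entirely, and applies Lemma~\ref{extremal} to a different family $\mathcal{J}$ than you propose. Rather than taking $\mathcal{J}$ to be a disjoint subfamily of uncovered intervals, it greedily chooses pillars $q_1<q_2<\cdots<q_n$ in $K$ (starting from $q_0=\ell(K)$) so that each sub-arch $(q_{i-1},q_i)$ has degree exactly $2\omega$, stopping as soon as the remaining degree drops to at most $2\omega$; one additional pillar $q^*$ is placed inside a single uncovered interval to guarantee the induction progresses. Taking $\mathcal{J}$ to be the $n$ sub-arches $(q_0,q_1),\dots,(q_{n-1},r(K))$, every member has degree at least $2\omega$, so Lemma~\ref{extremal} gives $2\omega n<\omega(d(K)+n-\omega)$, which directly bounds $n$ (and hence $|Q|$) by $\omega\log_2\omega+\omega\log_2\log_2\omega+5\omega$. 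The greedy choice guarantees $t\leq 2\omega$ for free, and the numbers then slot into Lemma~\ref{devide and conquer} to preserve both invariants. This is cleaner than selecting a low-degree subfamily of uncovered intervals and separately adding splitting pillars, and it sidesteps the balancing problem you flag but do not resolve. If you want to salvage your route, the concrete gap to fill is: specify the splitting pillars, bound their total number plus the piercing pillars by roughly $8\omega\log_2\omega$, and verify $t\leq 2\omega$ on every component of $K\setminus Q$; the paper's greedy construction is the shortest way I know to do all three simultaneously.
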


\begin{proof}
	The theorem is trivially true if $(P,\preceq, \psi)$ is a complete pillar assignment, so we proceed by induction on the number of intervals that are not coloured by $\psi=\psi_{(P,\preceq)}$.
	
	Let $K$ be an arch of $(P,\preceq, \psi)$ that contains some interval $I$ of $\mathcal{I}$. Then $I$ is not coloured by $\psi$. Let $q^*$ be a pillar contained in $I$.
	Let $q_0=\ell (K)$.
	Now for each integer $i\ge 1$ in increasing order, if the pillar $q_{i-1}$ was chosen and $d_{(P,\preceq, \psi)}(q_{i-1}, r(K)) > 2\omega$, then we choose the next pillar $q_i \in K$ so that $q_{i} > q_{i-1}$ and $d_{(P,\preceq, \psi)}(q_{i-1}, q_i) = 2\omega$. Note that such a $q_i$ can always be chosen if $d_{(P,\preceq, \psi)}(q_{i-1}, r(K)) > 2\omega$ as incrementally increasing some $q>q_{i-1}$ increases the degree of $(q_{i-1},q)$ by at most 1. Let $n$ be equal to the largest $i$ such that the pillar $q_i$ is chosen, and let $Q=\{q_1, \dots, q_n, q^*\}$.
	Then $d_{(P,\preceq, \psi)}(q_{n-1}, r(K)) > 2\omega$.
	
	Let $\mathcal{J}=\{(q_0,q_1),\dots, (q_{n-1},r(K))\}$.
	Then $\sum_{J\in \mathcal{J}}d(J) > 2\omega n$. So by Lemma~\ref{extremal},
	\[
	2\omega n < \sum_{J\in \mathcal{J}}d(J) \le \omega (d(K)+n-\omega)\le \omega (\omega \log_2 (\omega) + \omega \log_2(\log_2 (\omega)) + 5\omega + n).
	\]
	Hence $n < \omega \log_2 (k) + \omega \log_2(\log_2 (\omega)) + 5\omega$, and so $|Q|< \log_2 (\omega) + \omega \log_2(\log_2 (\omega)) + 5\omega +1$.

	Then by Lemma~\ref{devide and conquer} there is a pillar assignment $(P',\preceq', \psi')$ extending $(P,\preceq, \psi)$ such that $P'= P\cup Q$, and for every arch $K'$ of $(P',\preceq', \psi')$ contained in $K$,
	\begin{align*}
	d_{(P',\preceq', \psi')}(K')
	&\le 2\omega + \omega  \lceil \log_2 (|Q|+1) \rceil \\
	&< 3\omega + \omega \log_2(\omega \log_2 (\omega) +\omega \log_2(\log_2 (\omega)) + 5\omega +2 ) \\
	&\le 3\omega + \omega \log_2(8\omega \log_2 (\omega)) \\
	&= \omega \log_2(\omega) + \omega \log_2(\log_2 (\omega)) + 6\omega,
	\end{align*}
	and furthermore
	\begin{align*}
	\chi(P',\preceq', \psi')
	&\le \max\left\{\chi(P,\preceq, \psi), \  d_{(P,\preceq, \psi)}(K) + \omega  \lceil \log_2(|Q|+1) \rceil   \right\} \\
	&\le \max\left\{\chi(P,\preceq, \psi), \  2\omega \log_2 (\omega) +2\omega \log_2(\log_2 (\omega)) + 10\omega   \right\}.
	\end{align*}
	
	Hence $(P',\preceq', \psi')$ satisfies the inductive hypothesis. Since $q^*\in Q\subseteq P'$, the interval $I$ is coloured by $\psi'$. As $I$ is not coloured by $\psi$, the number of intervals of $\mathcal{I}$ that are not coloured by $\psi'$ is strictly less than the number of intervals of $\mathcal{I}$ that are not coloured by $\psi$. Hence by induction there exists a complete pillar assignment $(P^*,\preceq^*, \psi^*)$ extending $(P',\preceq', \psi')$ (and thus $(P,\preceq, \psi)$) with $\chi(P^*,\preceq^*, \psi^*) \le 2\omega \log_2 (\omega) +2\omega \log_2(\log_2 (\omega)) + 10\omega$ as required.
\end{proof}

We remark that with more careful arguments it is possible to improve the lower order terms slightly, but we are not aware of a way to improve the leading constant.

To prove that $K_4$-free circle graphs are 19-colourable, one should prove the following modification of Theorem~\ref{induction}. We believe that 19 is still far from optimal and that more specialized arguments could provide significant improvements to this bound. So we only sketch the proof and in particular the required modifications to the proof of Theorem~\ref{induction}.

\begin{theorem}\label{K4}
	Let $G$ be a circle graph with clique number at most $3$, and let $\mathcal{I}$ be an interval system with overlap graph $G$.
	Let $(P,\preceq, \psi)$ be a pillar assignment of $\mathcal{I}$ such that $\chi(P,\preceq, \psi) \le 19$, and $d(K)_{(P,\preceq, \psi)}\le 13$ for every arch $K$ of $(P,\preceq, \psi)$.
	Then there is a complete pillar assignment $(P^*,\preceq^*, \psi^*)$ of $\mathcal{I}$ extending $(P,\preceq, \psi)$ with ${\chi(P^*,\preceq^*, \psi^*)} \le 19$.
\end{theorem}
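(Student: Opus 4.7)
The plan is to adapt the inductive argument of Theorem~\ref{induction} to the $\omega=3$ setting, keeping the step-by-step extension of the pillar assignment $(P,\preceq,\psi)$ but retuning the constants and the greedy step size for the tighter targets $\chi\le 19$ and $d(K)\le 13$. As in the main proof, I would induct on the number of intervals of $\mathcal{I}$ not coloured by $\psi$. The base case is trivial, so pick an arch $K$ of $(P,\preceq,\psi)$ containing some uncoloured interval $I$, choose a pillar $q^{*}\in I$, and run the greedy pillar selection from Theorem~\ref{induction}: with an appropriate step $s$, select $q_1,\dots,q_n\in K$ so that $d(q_{i-1},q_i)=s$, stopping once the trailing arch has degree at most $s$. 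Set $Q=\{q_1,\dots,q_n,q^{*}\}$ and apply Lemma~\ref{devide and conquer} to obtain the extension $(P',\preceq',\psi')$. Since $q^{*}\in Q\subseteq P'$, the interval $I$ is coloured by $\psi'$, which strictly reduces the number of uncoloured intervals; if both invariants are preserved, the inductive hypothesis applied to $(P',\preceq',\psi')$ finishes the argument.

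The key verification, exactly as in Theorem~\ref{induction}, is that the greedy step produces few enough pillars to keep both invariants. Lemma~\ref{extremal} forces $n(s-3)<3(d(K)-3)\le 30$, so $n<30/(s-3)$, while Lemma~\ref{devide and conquer} yields
\[
d_{(P',\preceq',\psi')}(K')\le s+3\lceil\log_2(|Q|+1)\rceil
\qquad\text{and}\qquad
\chi(P',\preceq',\psi')\le\max\{19,\;d(K)+3\lceil\log_2(|Q|+1)\rceil\}.
\]
In the tightest case $d(K)=13$, the chromatic bound forces $\lceil\log_2(|Q|+1)\rceil\le 2$, hence $|Q|\le 3$, and the arch bound then forces $s\le 7$.

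The hard part---and the reason the paper only sketches the proof---is that the naive choice $s=2\omega=6$ inherited from Theorem~\ref{induction} only bounds $n<10$, too large to give $|Q|\le 3$, while choosing $s$ large enough to cap $n\le 2$ via Lemma~\ref{extremal} already requires $s\ge 13>7$. I would close this gap by a case split on $d(K)$: when $d(K)$ is noticeably below $13$ there is enough slack in the chromatic bound to tolerate a larger $|Q|$ and hence a smaller $s$, whereas when $d(K)$ is close to $13$ a more specialised pillar-placement argument---possibly exploiting additional structural information from the $K_4$-freeness of the overlap graph---ensures $|Q|\le 3$ directly. Working through this bookkeeping in each case, verifying that the two invariants are preserved, and then concluding by induction yields a complete pillar assignment of $\mathcal{I}$ extending $(P,\preceq,\psi)$ with $\chi(P^{*},\preceq^{*},\psi^{*})\le 19$.
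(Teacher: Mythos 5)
Your proposal correctly reproduces the overall skeleton of the paper's argument (induct on the number of uncoloured intervals, greedily place pillars in an arch with a fixed degree step $s$, bound $n$ via Lemma~\ref{extremal}, and extend via Lemma~\ref{devide and conquer}), and you correctly diagnose the numerical obstruction: with the generic coefficient $\omega=3$ in Lemma~\ref{devide and conquer} there is no value of $s$ that simultaneously keeps $n$ small enough, keeps new arch degrees at most $13$, and keeps the colour count at most $19$. But you then leave the gap open, gesturing at a case split on $d(K)$ and ``additional structural information from $K_4$-freeness'' without identifying what it is. The case split on $d(K)$ alone does not close the gap: one can check that for any $d(K)\le 13$ the inequalities $s+3\lceil\log_2(n+1)\rceil\le 13$, $d(K)+3\lceil\log_2(n+1)\rceil\le 19$, and $n(s-3)<3(d(K)-3)$ are jointly unsatisfiable whenever $K$ still contains an uncoloured interval forcing $n\ge 1$, so more slack in $d(K)$ does not rescue the bookkeeping.

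The missing idea in the paper's sketch is a strengthened pillar choice. When placing each $q_i$, one can additionally arrange that there is an already-coloured interval $I_i\in\mathcal{I}$ that overlaps every interval of $\mathcal{I}$ contained in $K$ and containing $q_i$ (this is possible because $d(q_{i-1},q_i)=s$ is witnessed by coloured intervals with endpoints near $q_i$). In a $K_4$-free overlap graph this forces $\omega(\mathcal{I}_{q_i})\le 2$, so in Lemma~\ref{devide and conquer} the quantity $\omega(\mathcal{I}_p)$ that actually controls the colour growth is $2$ rather than $3$. With this replacement and $s=7$, Lemma~\ref{extremal} gives $n\le 7$, and Lemma~\ref{devide and conquer} gives $d_{(P',\preceq',\psi')}(K')\le 7+2\lceil\log_2 8\rceil=13$ and $\chi(P',\preceq',\psi')\le\max\{19,\,13+6\}=19$, exactly closing both invariants. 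A second point you alter from the paper: you include $q^{*}$ in $Q$ from the outset, but the coloured-witness trick that gives $\omega(\mathcal{I}_{q_i})\le 2$ is not available for an arbitrary $q^{*}$. The paper instead runs Lemma~\ref{devide and conquer} on $Q=\{q_1,\dots,q_n\}$ only, and if this fails to colour a new interval (so $\psi'=\psi$ and every new arch in $K$ has degree at most $7$) it adds $q^{*}$ in a single extra step, where the ample slack ($7+3\le 13$ and $7+3\le 19$) makes the generic coefficient $3$ harmless.
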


\begin{proof}[Sketch of proof.]
	As before we proceed by induction on the number of intervals that are not coloured by $\psi=\psi_{(P,\preceq)}$.
	Let $K$ be an arch of $(P,\preceq, \psi)$ that contains some interval of $\mathcal{I}$.
	
	The pillars $q_1,\ldots ,q_n $ as in the proof of Theorem~\ref{induction} are then chosen so that $d_{(P,\preceq, \psi)}(q_{i-1}, q_i) = 7$. By Lemma~\ref{extremal} it then follows that $n\le 7$. Additionally each pillar $q_i$ can be chosen so that there is an interval $I_i\in \mathcal{I}$ coloured by $\psi$ that overlaps with every interval of $\mathcal{I}$ that is contained in $K$ and contains $q_i$.
	Then we let $(P',\preceq', \psi')$ be the pillar assignment extending $(P,\preceq, \psi)$ with $P'= P\cup \{q_1,\ldots ,q_n \}$ as in Lemma~\ref{devide and conquer}.
	We can observe that $\omega(\mathcal{I}_{q_i})\le 2$ since every interval of $\mathcal{I}_{q_i}$ overlaps with $I_i$. This observation provides a slight improvement to the bounds obtained from Lemma~\ref{devide and conquer}, in particular, instead of taking $\omega \le 3$ for the resulting bounds, we may instead replace each occurrence of ``$\omega$" with ``2".
	Then $\chi(P',\preceq', \psi')\le 19$ and $d_{(P',\preceq', \psi')}(K')\le 7+6=13$ for every arch $K'$ of $(P',\preceq', \psi')$ contained in $K$.
	
	We did not pick a pillar $q^*$ at the start to guarantee that $\psi'$ colours an additional interval of $\mathcal{I}$. However if it happens that $ \psi'$ does not colour an additional interval of $\mathcal{I}$, then for each arch $K'$ of $(P',\preceq', \psi')$ contained in $K$, we would have that $d_{(P',\preceq', \psi')}(K')\le 7$. So in this case we may simply extend $(P',\preceq', \psi')$ by a single such pillar $q^*$ since some such arch $K'$ must contain an interval of $\mathcal{I}$.
	The existence of such a complete pillar assignment $(P^*,\preceq^*, \psi^*)$ then follows by induction.
\end{proof}

\section{Lower bound}

In this section we give a simple construction to prove Theorem~\ref{lower}. We find it more convenient to use a chord diagram representation of our circle graphs, rather than the interval overlap representations that were used to prove Theorem~\ref{main} in the previous sections. We allow chords to coincide and consider the chords to be open, so two chords that share an endpoint only intersect if they share both their endpoints.
It can easily be shown that circle graphs are exactly intersection graphs of open chords on a circle where chords can coincide.

The construction is inspired by those given by Kostochka~\cite{kostochka1988upper} for both circle graphs and their complements, as well his proof that the complements of circle graphs are $\chi$-bounded.
With essentially the same arguments, our construction also yields a new proof that there are complements of circle graphs with clique number at most $\omega$ and chromatic number at least $\omega(\ln \omega -O(1))$.

For positive integers $\omega$ and $n$ with $n>3\omega-3$ we define a chord diagram $\mathcal{D}_{n,\omega}$ as follows.
Let $p_1, q_1, p_2, q_2, \dots , p_n , q_n$ be points on a circle in cyclic clockwise order.
Now for each $i\in [n]$, and $j\in [\omega-1]$, let $\mathcal{C}_{i,j}$ consist of exactly $\left\lfloor \frac{\omega}{j+1} \right\rfloor $ coinciding open chords with endpoints $p_i, q_{i+j}$ (taking $i+j$ modulo $n$). Then let $\mathcal{D}_{n,\omega}=\bigcup_{i\in [n]} \bigcup_{j\in [\omega -1]} \mathcal{C}_{i,j}$. For an example, see Figure~\ref{fig:chord}, which illustrates the chord diagram $\mathcal{D}_{17,6}$.

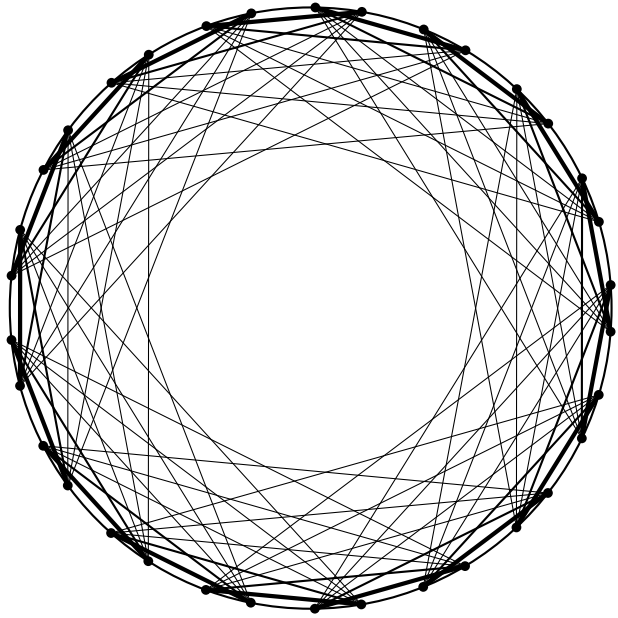
\begin{figure}
\centering 
\begin{tikzpicture}
\draw[thick] (0,0) circle (4cm);
\foreach \i in {1,...,17} {
	\coordinate (N\i) at (\i*360/17 - 4.5:4cm);
	\fill[black] (N\i) circle (0.065 cm);
	\coordinate (M\i) at (\i*360/17 + 4.4:4cm);
	\fill[black] (M\i) circle (0.065 cm);
}
\foreach \i in {1,...,16} {
	\pgfmathparse{\i + 1}
	\edef\j{\pgfmathresult}
	\draw[ultra thick] (N\i) -- (M\j);
}
\foreach \i in {17} {
	\pgfmathparse{\i + 1 - 17}
	\edef\j{\pgfmathresult}
	\draw[ultra thick] (N\i) -- (M\j);
}
\foreach \i in {1,...,15} {
	\pgfmathparse{\i + 2}
	\edef\j{\pgfmathresult}
	\draw[thick] (N\i) -- (M\j);
}
\foreach \i in {16,17} {
	\pgfmathparse{\i + 2 - 17}
	\edef\j{\pgfmathresult}
	\draw[thick] (N\i) -- (M\j);
}
\foreach \i in {1,...,14} {
	\pgfmathparse{\i + 3}
	\edef\j{\pgfmathresult}
	\draw[thin] (N\i) -- (M\j);
}
\foreach \i in {15,16,17} {
	\pgfmathparse{\i + 3 - 17}
	\edef\j{\pgfmathresult}
	\draw[thin]  (N\i) -- (M\j);
}
\foreach \i in {1,...,13} {
	\pgfmathparse{\i + 4}
	\edef\j{\pgfmathresult}
	\draw[thin]  (N\i) -- (M\j);
}
\foreach \i in {14,15,16,17} {
	\pgfmathparse{\i + 4 - 17}
	\edef\j{\pgfmathresult}
	\draw[thin]  (N\i) -- (M\j);
}
\foreach \i in {1,...,12} {
	\pgfmathparse{\i + 5}
	\edef\j{\pgfmathresult}
	\draw[thin]  (N\i) -- (M\j);
}
\foreach \i in {13,14,15,16,17} {
	\pgfmathparse{\i + 5 - 17}
	\edef\j{\pgfmathresult}
	\draw[thin]  (N\i) -- (M\j);
}
\end{tikzpicture}
\caption{The chord diagram of $\mathcal{D}_{17,6}$. The thickness of the chords corresponds to the number of chords that coincide.}
\label{fig:chord}
\end{figure}

Now we prove some bounds on the number of chords contained in $\mathcal{D}_{n,\omega}$, as well as the size of the largest set of pairwise intersecting and pairwise disjoint chords in $\mathcal{D}_{n,\omega}$.

\begin{lemma}\label{vertices}
	The set of chords $\mathcal{D}_{n,\omega}$ has size greater than $n\omega (\ln \omega - 2)$.
\end{lemma}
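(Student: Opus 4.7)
The plan is to reduce the statement to a standard harmonic‐number estimate. By the definition of $\mathcal{D}_{n,\omega}$ as the disjoint union over $i\in[n]$ and $j\in[\omega-1]$ of the multisets $\mathcal{C}_{i,j}$, each of size $\lfloor \omega/(j+1)\rfloor$, the total chord count is
\[
|\mathcal{D}_{n,\omega}| \;=\; n\sum_{j=1}^{\omega-1}\left\lfloor \frac{\omega}{j+1}\right\rfloor \;=\; n\sum_{k=2}^{\omega}\left\lfloor \frac{\omega}{k}\right\rfloor,
\]
after re-indexing with $k=j+1$. So it suffices to show $\sum_{k=2}^{\omega}\lfloor \omega/k\rfloor > \omega(\ln\omega-2)$.

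First I would drop the floor function using the trivial inequality $\lfloor x \rfloor > x-1$, which gives
\[
\sum_{k=2}^{\omega}\left\lfloor \frac{\omega}{k}\right\rfloor \;>\; \sum_{k=2}^{\omega}\left(\frac{\omega}{k}-1\right) \;=\; \omega\bigl(H_\omega - 1\bigr) - (\omega-1),
\]
where $H_\omega = \sum_{k=1}^{\omega} 1/k$ is the $\omega$-th harmonic number. Then I would apply the standard bound $H_\omega > \ln\omega$ (which follows, for instance, from comparing $\sum_{k=1}^{\omega}1/k$ with $\int_1^{\omega+1}dx/x = \ln(\omega+1) > \ln\omega$).

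Combining these two estimates yields
\[
\sum_{k=2}^{\omega}\left\lfloor \frac{\omega}{k}\right\rfloor \;>\; \omega\ln\omega - \omega - \omega + 1 \;=\; \omega(\ln\omega - 2) + 1 \;>\; \omega(\ln\omega - 2),
\]
and multiplying by $n$ gives the desired bound on $|\mathcal{D}_{n,\omega}|$. There is no real obstacle here: the entire argument is a direct computation once the floors are handled, and the slack of $+1$ left at the end shows the estimate is not even tight. The only thing to be mildly careful about is the indexing shift from $j+1$ to $k$, and that the hypothesis $n > 3\omega - 3$ plays no role at this stage (it will presumably matter for the clique number and chromatic number lemmas that follow).
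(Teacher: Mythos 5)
Your proof is correct and follows essentially the same approach as the paper: re-index the sum, drop the floors via $\lfloor x\rfloor > x-1$, and apply the harmonic-number estimate $H_\omega > \ln\omega$. The only cosmetic difference is that the paper adds and subtracts the $j=1$ term $\lfloor\omega/1\rfloor=\omega$ to work with $\sum_{j=1}^{\omega}\lfloor\omega/j\rfloor$, whereas you re-index directly to $\sum_{k=2}^{\omega}\lfloor\omega/k\rfloor$; the arithmetic is identical.
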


\begin{proof}
	By definition,
	\[
	|\mathcal{D}_{n,\omega}| = 
	\left|\bigcup_{i\in [n]} \bigcup_{j\in [\omega -1]} \mathcal{C}_{i,j} \right| =
	\sum_{i=1}^n \sum_{j=1}^{\omega -1} |\mathcal{C}_{i,j}|
	= n \sum_{j=1}^{\omega -1} \left\lfloor \frac{\omega}{j+1} \right\rfloor
	= n \sum_{j=1}^{\omega} \left\lfloor \frac{\omega}{j} \right\rfloor - n\omega.
	\]
	Then observe that:
	\[
	n \sum_{j=1}^{\omega} \left\lfloor \frac{\omega}{j} \right\rfloor - n\omega
	\ge  n \sum_{j=1}^{\omega} \frac{\omega}{j} - 2n\omega
	= n\omega \sum_{j=1}^{\omega} \frac{1}{j} - 2n\omega
	> n\omega \ln \omega - 2n\omega 
	= n\omega (\ln \omega - 2).
	\]
	Hence the lemma follows.
\end{proof}

\begin{lemma}\label{clique}
	There are no $\omega+1$ pairwise intersecting chords contained in $\mathcal{D}_{n,\omega}$.
\end{lemma}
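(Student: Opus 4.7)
The plan is to bound $|\mathcal{F}|$ for any pairwise intersecting family $\mathcal{F}\subseteq\mathcal{D}_{n,\omega}$ by grouping its members according to which bunch $\mathcal{C}_{i,j}$ they belong to. Suppose $\mathcal{F}$ uses chords from $m$ distinct bunches $\mathcal{C}_{i_s,j_s}$, contributing $a_s\ge 1$ chords each, and choose one representative from each bunch. Since $|\mathcal{C}_{i,j}|=\lfloor \omega/(j+1)\rfloor$, once I show that every representative satisfies $j_s\ge m-1$, it will follow that $a_s\le \lfloor \omega/m\rfloor$ and therefore $|\mathcal{F}|=\sum_s a_s \le m\lfloor \omega/m\rfloor\le \omega$, as desired.

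To prove $j_s\ge m-1$, I fix a representative, cyclically relabel the points of the circle so that it becomes $C_1=(p_1,q_{1+j_1})$ (possible without wrap since $j_1\le \omega-1<n$), and examine the other $m-1$ representatives. Each intersects $C_1$, so has exactly one endpoint in the interior of the short arc of $C_1$. I classify them into two types: a \emph{type P} chord has its $p$-endpoint $p_i$ in the short arc, forcing $i\in\{2,\dots,j_1+1\}$, and a short calculation using $j\le \omega-1$ places its $q$-endpoint at some $q_M$ with $M\in\{j_1+2,\dots,j_1+\omega\}$; a \emph{type Q} chord has its $q$-endpoint $q_M$ in the short arc with $M\in\{1,\dots,j_1\}$, and the analogous calculation (now forcing the chord itself to wrap, i.e.\ $M=i+j-n$) places its $p$-endpoint at some $p_i$ with $i\in\{n-\omega+2,\dots,n\}$.

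The key step is a cyclic-order analysis showing that a type P chord with index $i$ and a type Q chord with index $M$ intersect if and only if $i\le M$. Inside the short arc of $C_1$, alternation between the two chords reduces to whether $p_i$ precedes $q_M$, which is exactly the condition $i\le M$. Inside the long arc of $C_1$, the hypothesis $n>3\omega-3$ forces the position $2M$ of the type P chord's $q$-endpoint (at most $2j_1+2\omega$) to be strictly less than the position $2i-1$ of the type Q chord's $p$-endpoint (at least $2n-2\omega+3$), so the other half of the alternation condition is automatically satisfied. Hence the whole intersection condition collapses to $i\le M$.

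Given this criterion, the counting is elementary. Writing $a$ and $b$ for the number of type P and type Q chords, $a+b=m-1$, and $\max_{\text{type P}} i\le \min_{\text{type Q}} M$ whenever both $a$ and $b$ are positive. Since the type P chords use $a$ distinct $i$-values in $\{2,\dots,j_1+1\}$ the maximum is at least $a+1$, and since the type Q chords use $b$ distinct $M$-values in $\{1,\dots,j_1\}$ the minimum is at most $j_1-b+1$, giving $a+b\le j_1$. The degenerate cases $a=0$ or $b=0$ give $m-1\le j_1$ directly from the range constraints alone. Since the choice of $C_1$ was arbitrary, $j_s\ge m-1$ holds for every $s$, and the bound $|\mathcal{F}|\le \omega$ follows. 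I expect the cyclic-order verification of the ``$i\le M$'' intersection criterion---and in particular the use of the $n>3\omega-3$ hypothesis to rule out the other alternation case---to be the most delicate part of the argument.
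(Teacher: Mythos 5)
Your proof is correct, and it establishes the same key structural fact as the paper---that if the pairwise intersecting family uses $m$ distinct bunches $\mathcal{C}_{i_s,j_s}$, then $j_s\ge m-1$ for each $s$---but via a different and more self-contained route. The paper gets this fact essentially for free by invoking the standard picture of a pairwise intersecting chord family: after a cyclic relabeling one may write the $p$-endpoints as $a_1<\cdots<a_\ell$ and the $q$-endpoints as $b_1<\cdots<b_\ell$ with $a_\ell\le b_1$ and the monotone matching $p_{a_i}\leftrightarrow q_{b_i}$, whence $b_i-a_i\ge \ell-1$ is immediate; it then sums the bunch sizes. Your proof instead fixes one representative $C_1$, classifies the remaining $m-1$ representatives into type P and type Q by which endpoint lands in the short arc of $C_1$, and extracts $a+b\le j_1$ from the $i\le M$ intersection criterion together with the distinctness of the index values used by each type, finishing by symmetry over the choice of $C_1$. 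What the paper's route buys is brevity, at the cost of leaving the ``without loss of generality after rotating'' step unjustified; what your route buys is a first-principles verification of the spacing bound, at the cost of the somewhat delicate long-arc alternation check (where, as you correctly note, the hypothesis $n>3\omega-3$ is exactly what forces $q_{M_P}$ to precede $p_{i_Q}$ and collapses the criterion to $i\le M$). One small remark in your favour: you correctly use $a_s\le\lfloor\omega/(j_s+1)\rfloor$, whereas the paper's displayed count $\lfloor\omega/(b_i-a_i)\rfloor$ is off by one (it should be $\lfloor\omega/(b_i-a_i+1)\rfloor$, since the bunch with endpoints $p_{a_i},q_{b_i}$ has parameter $j=b_i-a_i$); your version is precisely what is needed to close the gap when $b_i-a_i=\ell-1$ is attained.
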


\begin{proof}
	It is enough to show that if $C\subseteq \mathcal{D}_{n,\omega}$ is a collection of pairwise intersecting chords, then $|C|\le \omega$.
	Let $P$ be the set of endpoints of chords in $C$ that are contained in $\{p_1,\dots p_n\}$, and similarly for $Q$. Then $|P|=|Q|$ as a pair of open chords that share an endpoint only intersect if they share both their endpoints.
	Furthermore after possibly rotating the chords of $C$ around the circle, we can assume without loss of generality that $P=\{p_{a_1},\dots ,p_{a_\ell}\}$ and $Q=\{q_{b_1},\dots ,q_{b_\ell}\}$ with $a_1<\dots < a_\ell \le b_1 < \dots < b_\ell$, and that every chord of $C$ has one of $\{p_{a_1},q_{b_1}\}, \dots , \{p_{a_\ell},q_{b_\ell}\}$ as its endpoints. For each $i\in [\ell]$, there are exactly $\left\lfloor \frac{\omega}{(b_i - a_i -1) + 1}    \right\rfloor= \left\lfloor \frac{\omega}{b_i - a_i}    \right\rfloor$
	chords with endpoints contained in $\{p_{a_1},q_{b_1}\}$.
	Therefore
	\[
	|C|\le \sum_{i=1}^{\ell} \left\lfloor \frac{\omega}{b_i - a_i}    \right\rfloor
	\le \sum_{i=1}^{\ell} \frac{\omega}{b_i - a_i}
	\le \sum_{i=1}^{\ell} \frac{\omega}{\ell}
	=\omega.\]
\end{proof}

\begin{lemma}\label{stable}
	There is no set of $n$ pairwise disjoint chords contained in $\mathcal{D}_{n,\omega}$.
\end{lemma}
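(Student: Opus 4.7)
The plan is to show by a laminar-family counting argument that any set $C$ of pairwise disjoint chords of $\mathcal{D}_{n,\omega}$ satisfies $|C|\le n-1$, which immediately implies the lemma. For each chord $c=\{p_i,q_{i+j}\}$, let the \emph{short arc} $A(c)$ be the open arc from $p_i$ to $q_{i+j}$ passing through the $2j$ labeled points $q_i,p_{i+1},q_{i+1},\dots,p_{i+j}$. Since $j\le\omega-1$ and $n>3\omega-3$ we have $2j<2(n-j)$, so $A(c)$ really is the shorter of the two arcs determined by $c$.

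For any two disjoint chords $c_1,c_2\in C$, the closed short arcs $\overline{A(c_1)}$ and $\overline{A(c_2)}$ are either nested or share no labeled point: the non-crossing condition forces one chord's endpoints to lie entirely inside one arc of the other, and $j<n/3$ forces the short arc of the inner chord to be the inner arc. Furthermore, if $c_1,c_2\in C$ share an endpoint, then comparing $j$-values shows that one short arc is strictly nested inside the other. Call $c\in C$ \emph{maximal} when $\overline{A(c)}$ is not properly contained in $\overline{A(c')}$ for any other $c'\in C$. The two previous remarks imply that the maximal chords of $C$ have pairwise disjoint closed short arcs with no shared endpoints; hence if the maximals are $c_1,\dots,c_m$ with $j$-values $j_1,\dots,j_m$, summing the point counts $|\overline{A(c_s)}|=2j_s+2$ yields $\sum_{s=1}^m(2j_s+2)\le 2n$, i.e., $\sum_s j_s\le n-m$.

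The key remaining step is to bound, for each maximal $c_s$, the number of chords of $C$ whose endpoints both lie in $\overline{A(c_s)}$. I prove by induction on $j\ge 0$ the auxiliary claim: the number of pairwise disjoint chords of $\mathcal{D}_{n,\omega}$ with both endpoints in $\{p_i,q_i,\dots,p_{i+j},q_{i+j}\}$ is at most $j$. The cases $j=0,1$ are immediate. For the inductive step, split on whether the outer chord $\{p_i,q_{i+j}\}$ lies in the collection. If not, then every top chord inside the collection has $j$-value strictly less than $j$, and the same disjoint-closed-arc counting applied one level down yields $\sum j'_s\le j+1-k$ with $k\ge 1$ tops, so the inductive hypothesis gives a total of at most $\sum j'_s\le j$. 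If the outer chord is present, then it is the unique top and the remaining chords all have $j$-value at most $j-1$; applying the argument again to those remaining chords (still with endpoints in the same $2(j+1)$-point set) gives at most $j-1$ of them, for a total of $j$.

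Finally, partition $C$ by which maximal chord's closed short arc contains each chord; this is well-defined because the closed short arcs of the maximals are pairwise disjoint. The auxiliary claim bounds the contribution of each maximal $c_s$ by $j_s$, so $|C|\le\sum_s j_s\le n-m\le n-1<n$, proving the lemma. The main obstacle in a polished write-up is the boundary case of the induction where the outer chord itself appears in the collection---the inductive hypothesis cannot be invoked on the outer arc directly---but this is resolved by removing that chord and reducing to a configuration whose chords all have strictly smaller $j$-values inside the same arc.
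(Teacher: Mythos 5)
Your proof is correct, but it takes a genuinely different route from the paper's. The paper passes to an auxiliary directed graph $G$ on $\{v_1,\dots,v_n\}$ with an edge $v_i\to v_j$ for each chord of $S$ with endpoints $p_i,q_j$, observes that $G$ is outerplanar with all edges oriented clockwise, notes that any cycle in such an embedding would contain a directed path of length two through some $v_i$, and rules this out because any $p_i$-chord and any $q_i$-chord of $\mathcal{D}_{n,\omega}$ intersect; hence $G$ is a forest and $|S|=|E(G)|<n$. You instead exploit the geometry directly: you show the closed short arcs of pairwise disjoint chords form a laminar family, bound the total $j$-value of the maximal chords by packing their disjoint arcs into the $2n$ labelled points, and then prove by induction on $j$ that at most $j$ pairwise disjoint chords can live inside a fixed short arc on $2(j+1)$ labelled points. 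The key technical wrinkle in your induction --- when the outer chord itself belongs to the family, so the inductive hypothesis cannot be invoked on the same arc --- is handled correctly: removing the outer chord leaves tops with $j$-values at most $j-1$, and the arc-packing bound $\sum_s j'_s \le j+1-k'$ together with $j'_s\le j-1$ gives at most $j-1$ remaining chords in either the $k'=1$ or $k'\ge 2$ case. The paper's argument is shorter and leans on a clean planarity fact; yours is longer but more elementary and self-contained, and the laminar/arc-packing structure you extract is arguably closer in spirit to the extremal machinery (Theorem~\ref{estype}) used elsewhere in the paper.
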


\begin{proof}
	Let $S$ be a set of pairwise disjoint chords of $\mathcal{D}_{n,\omega}$. Now consider an auxiliary directed graph $G$ on vertex set $\{v_1,\dots, v_n \}$ where there is an edge directed from $v_i$ to $v_j$ whenever $S$ contains a chord with endpoints $p_i$ and $q_j$. First note that $|S|=|E(G)|$ as all the chords with endpoints $p_i$ and $q_j$ intersect.
	
	Now observe that $G$ is outerplanar, with the natural embedding of $v_1,\dots, v_n$ being on the circle in clockwise order and all edges of $G$ directed in the clockwise direction.
	In a directed outerplanar graph with such an embedding, all cycles contain a directed path of length 2 in the clockwise direction.
	However $G$ has no directed path of length 2 as such a path with internal vertex $v_i$ would imply that $S$ contains a chord with an endpoint $p_i$, and another with the endpoint $q_i$, a contradiction since all such chords of $\mathcal{D}_{n,\omega}$ intersect.
	Hence $G$ is a forest, and so $|S|=|E(G)| < |V(G)| = n$ as required.
\end{proof}

We now prove Theorem~\ref{lower}, that for every positive integer $k$ there is a circle graph with clique number at most $k$ and chromatic number at least $k(\ln k - 2)$.

\begin{proof}[Proof of Theorem~\ref{lower}]
	For a positive integer $\omega$, choose some $n>3\omega -3$. Let $G_{n,\omega}$ be the intersection graph of the chord diagram $\mathcal{D}_{n,\omega}$, so $G_{n,\omega}$ is a circle graph.
	By Lemma~\ref{clique}, the graph $G_{n,\omega}$ has clique number at most $\omega$.
	By Lemma~\ref{vertices}, $|V(G_{n,\omega})|>n\omega (\ln \omega - 2)$, and by Lemma~\ref{stable}, the stable sets of $G_{n,\omega}$ all have size less than $n$.
	Hence $\chi(G_{n,\omega}) > \frac{n\omega (\ln \omega - 2)}{n} = \omega (\ln \omega - 2)$ as desired.
\end{proof}

\section*{Acknowledgements}

The author thanks the anonymous referee for helpful comments.

\bibliographystyle{amsplain}

\end{document}